\documentclass[11pt]{article}
\usepackage{latexsym,amssymb,amsmath,amsthm,enumerate,float,geometry,cite}
\geometry{a4paper,left=2cm,right=2cm, top=2cm, bottom=2cm}
\newtheorem{theorem}{Theorem}
\newtheorem{lemma}[theorem]{Lemma}
\newtheorem{proposition}[theorem]{Proposition}
\newtheorem{corollary}[theorem]{Corollary}
\newtheorem{conjecture}[theorem]{Conjecture}

\newtheorem{claim}{Claim}

\usepackage{lineno}
\usepackage{setspace}

\allowdisplaybreaks

\begin{document}
\onehalfspace

\title{Zero-sum copies of spanning forests\\ in zero-sum complete graphs}
\author{Elena Mohr \and Johannes Pardey \and Dieter Rautenbach}
\date{}

\maketitle
\vspace{-10mm}
\begin{center}
{\small Institute of Optimization and Operations Research, Ulm University,\\ 
Ulm, Germany, \texttt{$\{$elena.mohr,johannes.pardey,dieter.rautenbach$\}$@uni-ulm.de}}
\end{center}

\begin{abstract}
For a complete graph $K_n$ of order $n$, 
an edge-labeling $c:E(K_n)\to \{ -1,1\}$ satisfying $c(E(K_n))=0$,
and a spanning forest $F$ of $K_n$, 
we consider the problem to minimize $|c(E(F'))|$
over all isomorphic copies $F'$ of $F$ in $K_n$.
In particular, we ask under which additional conditions
there is a {\it zero-sum} copy, 
that is, a copy $F'$ of $F$ with $c(E(F'))=0$.

We show that there is always a copy $F'$ of $F$ with $|c(E(F'))|\leq \Delta(F)+1$,
where $\Delta(F)$ is the maximum degree of $F$.
We conjecture that this bound can be improved to $|c(E(F'))|\leq (\Delta(F)-1)/2$
and verify this for $F$ being the star $K_{1,n-1}$.
Under some simple necessary divisibility conditions,
we show the existence of a zero-sum $P_3$-factor,
and, for sufficiently large $n$, also of a zero-sum $P_4$-factor.\\[3mm]
{\bf Keywords:} Zero-sum subgraph; zero-sum Ramsey theory
\end{abstract}

\section{Introduction}

The kind of zero-sum problem that we study here 
can be traced back to algebraic results 
such as the well-known Erd\H{o}s-Ginzburg-Ziv theorem~\cite{ergizi} 
stating that for every $2m-1$ not necessarily distinct elements 
$a_1,\ldots,a_{2m-1}$ of $\mathbb{Z}_m$,
there is a set $I$ of $m$ indices such that $\sum\limits_{i\in I}a_i$ is $0$.
The two survey articles of Caro~\cite{ca} and Gao and Geroldinger~\cite{gage} 
describe many of the developments and ramifications of this area 
also known as zero-sum Ramsey theory 
within discrete mathematics and additive group theory.
The present paper has been motivated by recent research 
concerning (almost) zero-sum spanning forests 
in edge-labeled complete graphs~\cite{cahalaza,cayu,ehmora,kisi}.
More specifically, we consider a complete graph $K_n$ of order $n$ 
together with a {\it zero-sum $\pm 1$-labeling} of its edges, 
that is, a labeling $c:E(K_n)\to \{ -1,1\}$ that satisfies
$$c(E(K_n))=\sum\limits_{e\in E(K_n)}c(e)=0.$$
For a given spanning forest $F$ of $K_n$, we ask under which additional conditions
there is an isomorphic copy $F'$ of $F$ in $K_n$ that is also {\it zero-sum},
that is, it satisfies $c(E(F'))=0$.

For more general labelings and more general (almost) spanning subgraphs, 
Caro and Yuster~\cite{cayu} recently considered this kind of problem.
In the above setting their results imply the existence of an isomorphic copy $F'$ of $F$ in $K_n$ 
for which $|c(E(F'))|$ is at most $2\Delta(F)$, where $\Delta(F)$ is the maximum degree of $F$.
In~\cite{cahalaza} Caro, Hansberg, Lauri, and Zarb consider $\pm 1$-labelings $c$ of the edges of $K_n$
that are not necessarily zero-sum, 
and provide best a possible upper bound on $|c(E(K_n))|$
that implies the existence of some spanning tree $T$ of $K_n$ that is almost zero-sum,
that is, it satisfies $|c(E(T))|\leq 1$.
Note that this implies that $T$ is zero-sum exactly if $T$ has an even number of edges,
or, equivalently, the order $n$ is odd.
The difference to our question above is that Caro et al.~do not fix the isomorphism type 
of the spanning tree they find.
This is also the case for some well-known results in zero-sum Ramsey theory 
such as F\"{u}redi and Kleitman's~\cite{fukl} proof of Bialostocki's conjecture
that a complete graph $K_n$ 
with integral edge labels
always has a spanning tree whose edge labels sum to $0$ modulo $n-1$, cf.~also~\cite{scse}.
Some results from~\cite{cahalaza} though correspond to special cases of our question.
Caro et al.~provide a best possible upper bound on $|c(E(K_n))|$
that implies the existence of an almost zero-sum spanning path of $K_n$,
that is, an almost zero-sum isomorphic copy of $P_n$ in $K_n$.
In particular, their result implies that a zero-sum complete graph of order $n$, 
where $n-1$ is a multiple of $4$,
has a zero-sum spanning path,
where the divisibility condition on $n$ implies the two clearly necessary conditions
that $K_n$ and $P_n$ both have an even number of edges.
They conclude~\cite{cahalaza} with the open question whether 
every zero-sum $K_n$, where $n$ is a multiple of $4$,
contains a zero-sum perfect matching,
that is, a zero-sum isomorphic copy of $\frac{n}{2}K_2$.
This question was answered affirmatively by Ehard, Mohr, and Rautenbach~\cite{ehmora}
as well as by Kittipassorn and Sinsap~\cite{kisi}.
Again the divisibility condition on $n$ implies the two clearly necessary conditions
that $K_n$ and $\frac{n}{2}K_2$ both have an even number of edges.

The following two simple observations correspond to key arguments for many of the proofs in this area:
\begin{itemize}
\item If $F$ is a spanning forest of $K_n$ and $c$ is a zero-sum $\pm 1$-labeling of the edges of $K_n$,
then, by symmetry, every edge of $K_n$ belongs to the same number of isomorphic copies of $F$ in $K_n$,
which implies that the average of $c(E(F'))$, 
where $F'$ ranges over all isomorphic copies of $F$ in $K_n$, equals $0$.
In particular, there are copies $F^+$ and $F^-$ of $F$ with $c(E(F^+))\geq 0$ and $c(E(F^-))\leq 0$.
\item If $F_1,\ldots,F_k$ are isomorphic copies of $F$ in $K_n$,
$c(E(F_1))\geq 0$, 
$c(E(F_k))\leq 0$, and
each $F_{i+1}$ arises from $F_i$ by removing at most $\ell$ edges and adding at most $\ell$ edges,
then  
$|c(E(F_i))|\leq \ell$
for some $i$.
\end{itemize}
In our setting, these observations yield the following,
which improves the consequence of results from~\cite{cayu} mentioned above.

\begin{proposition}\label{proposition1}
If $c:E(K_n)\to \{ -1,1\}$ is a zero-sum labeling of $K_n$,
and $F$ is a spanning forest of $K_n$,
then there is an isomorphic copy $F'$ of $F$ in $K_n$ with 
$|c(E(F'))|\leq \Delta(F)+1$.
\end{proposition}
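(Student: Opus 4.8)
The plan is to combine the two observations stated above: the first to produce a copy $F^+$ with $c(E(F^+))\ge 0$ and a copy $F^-$ with $c(E(F^-))\le 0$, and the second, a discrete intermediate-value argument, to extract a copy whose weight lies in the interval $[-(\Delta(F)+1),\Delta(F)+1]$. The only real work is to connect $F^+$ to $F^-$ by a sequence of copies $F^+=F_1,F_2,\dots,F_k=F^-$ in which each $F_{i+1}$ arises from $F_i$ by deleting at most $\Delta(F)+1$ edges and inserting at most $\Delta(F)+1$ edges, so that the second observation applies with $\ell=\Delta(F)+1$.

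To build such a sequence I would fix one leaf $y_0$ of $F$ (if $F$ has no edge the statement is trivial) and use only \emph{elementary moves} that swap the positions of $y_0$ and one other vertex. Concretely, viewing a copy $F'$ as a bijection $\phi\colon V(F)\to V(K_n)$, an elementary move replaces $\phi$ by $\phi$ followed by the transposition of the two images $\phi(y_0)$ and $\phi(v)$ for some $v\in V(F)$. Any such move again yields an isomorphic copy of $F$, and the edges that change are exactly those of $F'$ incident with one of the two swapped positions, with the single exception of the edge joining them when $y_0v\in E(F)$. Since $y_0$ has degree $1$ and $v$ has degree at most $\Delta(F)$, at most $\Delta(F)+1$ edges are deleted and at most $\Delta(F)+1$ edges are inserted, which is precisely the bound needed for the second observation; the single leaf edge is the source of the additive $+1$.

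It remains to check that $F^+$ and $F^-$ can indeed be joined by such moves, i.e.\ that the transpositions available at $y_0$ suffice to realize the permutation of $V(K_n)$ carrying $F^+$ to $F^-$. This is where I expect the main (though still elementary) obstacle to lie, since the position hosting $y_0$ changes after every move, so one is not simply multiplying transpositions through a fixed element. I would argue connectivity by a selection-sort (token-sliding) argument: repeatedly use $y_0$ as a movable token to bring the required vertex into one target position and then recurse on the remaining positions, each correction costing a bounded number of elementary moves; equivalently, one may invoke the fact that sliding a single token over the edges of a complete graph on $n\ge 3$ vertices realizes every permutation. Feeding the resulting sequence into the second observation with $\ell=\Delta(F)+1$ then yields a copy $F'$ of $F$ with $|c(E(F'))|\le\Delta(F)+1$, as claimed.
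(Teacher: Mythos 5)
Your proposal is correct and takes essentially the same route as the paper: the paper also connects $F^+$ to $F^-$ by role-swaps in which one of the two swapped vertices occupies a degree-at-most-$1$ role (each costing at most $\Delta(F)+1$ deleted and $\Delta(F)+1$ added edges), realizing an arbitrary transposition of two high-degree vertices via a leaf intermediary --- the same identity $(a\,b)=(y_0\,a)(y_0\,b)(y_0\,a)$ that underlies your star-transposition/token-sliding argument --- and then invokes the two observations exactly as you do. The connectivity issue you flag is not a genuine obstacle: viewing each move as the right-composition $\phi\mapsto\phi\circ(y_0\,v)$ on $V(F)$ rather than as a transposition of positions in $V(K_n)$, you really are multiplying transpositions through the fixed element $y_0$, and these generate the full symmetric group.
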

For the two special spanning forests $P_n$ and $\frac{n}{2}K_2$ 
of a zero-sum labeled $K_n$,
the additional conditions needed to force zero-sum isomorphic copies 
were just some obviously necessary divisibility conditions.
For many special instances of our general question though, 
the obviously necessary divisibility conditions are far from sufficient
and the key arguments mentioned above are not strong enough to obtain 
the best possible results.
A good example is the star:
If $c:E(K_n)\to \{ -1,1\}$ is a zero-sum labeling of $K_n$,
then a result from Caro and Yuster~\cite{cayu}
implies the existence of a spanning star $K_{1,n-1}$ in $K_n$
with $|c(E(K_{1,n-1}))|\leq 2(n-2)$,
which can be improved to 
$|c(E(K_{1,n-1}))|\leq n$ using Proposition \ref{proposition1}.

Our next result is a best-possible improvement of this.

\begin{theorem}\label{theorem3}
If $c:E(K_n)\to \{ -1,1\}$ is a zero-sum labeling of $K_n$ and $n\geq 2$,
then there is an isomorphic copy $T$ of $K_{1,n-1}$ in $K_n$ 
with $|c(E(T))|\leq \frac{n}{2}-1=\frac{\Delta(K_{1,n-1})-1}{2}$.
\end{theorem}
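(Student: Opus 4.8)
The plan is to identify each copy of $K_{1,n-1}$ with its center: the copy $T_v$ centered at $v$ has edge set $\{uv:u\neq v\}$, so $c(E(T_v))=s(v)$, where $s(v):=\sum_{u\neq v}c(uv)$ is the \emph{signed degree} of $v$. Since every edge contributes to exactly two signed degrees, $\sum_{v}s(v)=2\,c(E(K_n))=0$. Hence the theorem is equivalent to the purely numerical assertion that $n$ integers summing to $0$ cannot all have absolute value exceeding $\frac{n}{2}-1$. First I would dispose of the trivial case: if $s(v)=0$ for some $v$, then $T_v$ is already zero-sum and we are done, since $0\leq\frac{n}{2}-1$ for $n\geq 2$.

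For the contradiction, suppose $|s(v)|>\frac{n}{2}-1$ for every $v$. A short parity argument sharpens this: a zero-sum labeling forces $\binom{n}{2}$ even, hence $n\equiv 0,1\pmod 4$, and in either case every $s(v)$ has the same fixed parity, namely that of $n-1$. One checks that $|s(v)|>\frac{n}{2}-1$ together with this parity forces $|s(v)|\geq\frac{n-1}{2}$. Let $A=\{v:s(v)>0\}$ and $B=\{v:s(v)<0\}$; by the previous step these partition $V(K_n)$, and both are nonempty because $\sum_v s(v)=0$. Replacing $c$ by $-c$ if necessary (which preserves all hypotheses and every $|s(v)|$), I may assume $b:=|B|\geq|A|=:a$, so that $\frac{n}{2}\leq b\leq n-1$.

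The crux is a cancellation identity for $\sigma_A:=\sum_{v\in A}s(v)$. Writing $c(E_A),c(E_B),c(E_{AB})$ for the label sums of the edges inside $A$, inside $B$, and between $A$ and $B$, double counting gives $\sigma_A=2c(E_A)+c(E_{AB})$ and $\sigma_B=2c(E_B)+c(E_{AB})$; since $\sigma_A+\sigma_B=0$, subtracting yields $\sigma_A=c(E_A)-c(E_B)$, so the between-edges cancel. This cancellation is essential: a naive averaging bound is too weak precisely because the large block of between-edges would otherwise be charged against $\sigma_A$. From the identity, $\sigma_A=c(E_A)-c(E_B)\leq|E_A|+|E_B|=\binom{a}{2}+\binom{b}{2}$, while on the other hand $\sigma_A=|\sigma_B|\geq b\cdot\frac{n-1}{2}$.

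Combining the two bounds and substituting $a=n-b$ turns $b\cdot\frac{n-1}{2}\leq\binom{a}{2}+\binom{b}{2}$ into $Q(b):=2b^2-(3n-1)b+(n^2-n)\geq 0$. This quadratic has discriminant $(n+1)^2$, hence roots $\frac{n-1}{2}$ and $n$, so $Q(b)<0$ exactly for $\frac{n-1}{2}<b<n$. But $\frac{n-1}{2}<\frac{n}{2}\leq b\leq n-1<n$, giving $Q(b)<0$, the desired contradiction. I expect the main obstacle to be isolating the cancellation identity and the accompanying parity bookkeeping rather than the final estimate. The bound is best possible: split $V(K_n)$ into two halves, label all internal edges of one half $+1$ and all internal edges of the other $-1$, and place a $\frac{n}{4}$-regular bipartite pattern of $+1$ labels between them, which makes every signed degree equal to $\pm\left(\frac{n}{2}-1\right)$.
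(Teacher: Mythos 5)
Your proof is correct, and it takes a genuinely different route from the paper's. The paper makes the same opening reduction (pass to the graph $G$ of $+1$-edges; in your language, the copy centered at $v$ has sum $s(v)=2d_G(v)-(n-1)$, so one needs a vertex with $\frac{n}{4}\le d_G(v)\le\frac{3}{4}n-1$), but from there it argues via an extremal counterexample: among all counterexamples it maximizes $\sum_{u\in V_+}d_G(u)$, uses local edge switches to conclude that the high-degree class $V_+$ induces a clique and the low-degree class $V_-$ an independent set, and then two degree-counting identities yield the contradiction $\frac{n}{2}<n_+<\frac{n}{2}$. You dispense with both the extremal choice and the switching: your cancellation identity $\sigma_A=c(E_A)-c(E_B)$ bounds $\sigma_A$ by the number of edges \emph{inside} the two classes, no matter what the labels on the many cross edges are, and the quadratic $Q(b)=2(b-n)\bigl(b-\tfrac{n-1}{2}\bigr)$ finishes; this is shorter and purely a counting argument. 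What the paper's switching buys in exchange is structural insight: it shows that near-extremal labelings must look like split graphs (clique plus independent set, with a balanced bipartite pattern between), which is exactly the tightness construction that both you and the paper exhibit. Two minor points. First, your opening claim that the theorem is ``equivalent to a purely numerical assertion'' about $n$ integers summing to zero is an overstatement (the integers $3,3,-3,-3$ sum to zero and all exceed $\frac{n}{2}-1=1$ in absolute value); your argument never relies on it, since the graph structure enters exactly where it must, through $\sigma_A\le\binom{a}{2}+\binom{b}{2}$. Second, the parity bookkeeping can be dropped: integrality of $s(v)$ alone gives $|s(v)|\ge\frac{n-1}{2}$, because for $n$ even the bound $|s(v)|>\frac{n}{2}-1$ forces $|s(v)|\ge\frac{n}{2}$, and for $n$ odd it forces $|s(v)|\ge\frac{n-1}{2}$ directly.
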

Theorem \ref{theorem3} is best possible: 
If the positive integer $n$ is a multiple of $4$,
$V(K_n)=\{ u_i:i\in I\}\cup \{ v_i:i\in I\}$ for 
$I=\left\{ 1,\ldots,\frac{n}{2}\right\}$,
and $c:E(K_n)\to \{ -1,1\}$ is such that the graph 
$G$ with vertex set $V(K_n)$ and edge set $c^{-1}(1)$ satisfies
\begin{eqnarray*}
E(G) & = & \big\{ u_iv_j:i,j\in I\mbox{ with $i+j$ even}\big\}
\cup \big\{ u_iu_j:i,j\in I\mbox{ with $i$ and $j$ distinct}\big\},
\end{eqnarray*}
then every copy $T$ of $K_{1,n-1}$ in $K_n$ satisfies $|c(E(T))|=\frac{n}{2}-1$.
After the proof of Theorem \ref{theorem3} in Section \ref{section2},
we describe a similar construction 
for the case that $n-1$ is a multiple of $4$,
which yields a labeling $c:E(K_n)\to \{ -1,1\}$ 
such that every copy $T$ of $K_{1,n-1}$ in $K_n$ 
satisfies $|c(E(T))|\geq \frac{n-5}{2}$.

As a possible strengthening of Theorem \ref{theorem3},
we pose the following conjecture.

\begin{conjecture}\label{conjecture2}
If $c:E(K_n)\to \{ -1,1\}$ is a zero-sum labeling of $K_n$,
and $F$ is a spanning forest of $K_n$,
then there is an isomorphic copy $F'$ of $F$ in $K_n$ with 
$|c(E(F'))|\leq \frac{\Delta(F)-1}{2}$.
\end{conjecture}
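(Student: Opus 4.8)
My plan starts from the interpolation observation that already yields Proposition~\ref{proposition1}: averaging over all copies produces copies $F^+$ and $F^-$ with $c(E(F^+))\geq 0\geq c(E(F^-))$, and connecting them by transpositions of the embedding $\phi\colon V(F)\to V(K_n)$ --- each such transposition swaps the images of two vertices and hence relabels only the edges incident to them --- gives interpolation steps of size $O(\Delta)$ and thus the bound $\Delta+1$. To reach $\frac{\Delta-1}{2}$ I would need two improvements: shrink the step size wherever this is possible, and, where the step size is genuinely large, replace interpolation by an averaging (\emph{halving}) argument. Throughout I would keep track of the parity of $c(E(F'))$, which is fixed by $|E(F)|$, since halving an odd number of edges at a vertex is what should produce the $-1$ in $\frac{\Delta-1}{2}$.

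The step size can be made small exactly at the leaves of $F$. Writing $L$ for the set of leaves and $C=V(F)\setminus L$, once the images of $C$ and the shape of $F$ are fixed, $c(E(F'))$ splits as a constant internal part plus $\sum_{\ell\in L}c(\phi(p_\ell)\phi(\ell))$, where $p_\ell$ denotes the parent of the leaf $\ell$. Transposing two leaf images changes this sum in steps of size at most $2$, so the leaf-placement becomes an assignment problem that I can reorganise by adjacent transpositions, driving the leaf-contribution to within $1$ of its own average while leaving everything else untouched. This disposes of forests that are leaf-heavy relative to $\Delta$ and, more importantly, shows that the whole difficulty concentrates on the high-degree vertices of $C$.

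The crux --- and the reason the statement remains a conjecture --- is a vertex $v$ of degree $\Delta$, whose image cannot be moved without relabelling up to $\Delta$ edges at once, so no small interpolation step is available there and the factor $\frac12$ must instead come from averaging. For the star this is precisely Theorem~\ref{theorem3}, where $v$ is the entire forest; I would try to localise that argument by pairing each copy with the copy obtained from an involution that flips the sign of about half of the $\Delta$ edges at $v$, forcing the two resulting values of $c(E(\cdot))$ to straddle $0$ within $\frac{\Delta-1}{2}$. The hard part, which I do not expect to be routine, is carrying out such halvings \emph{simultaneously} at all high-degree vertices while keeping the low-degree remainder of $F$ and the leaf-placement described above under control: these parts share the vertex set of $K_n$, so an involution chosen to help at $v$ disturbs edges elsewhere, and reconciling the star-type halving with a single global embedding of all of $F$ is exactly the obstacle that separates the general conjecture from the cases $P_n$, $\frac{n}{2}K_2$, and $K_{1,n-1}$ already settled.
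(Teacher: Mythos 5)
The statement you are asked to prove is Conjecture~\ref{conjecture2}, and the paper does not prove it: it is posed as an open problem, with only the star case settled (Theorem~\ref{theorem3}) and a weak version for forests of large maximum degree (Corollary~\ref{corollary1}). Your text, to its credit, is candid about this --- you explicitly flag the simultaneous halving at all high-degree vertices as ``the obstacle that separates the general conjecture'' from the known cases --- but that means what you have written is a research plan, not a proof. The central step, carrying out a balanced placement at every high-degree vertex of $F$ at once while the embeddings of the remaining vertices interact through the shared vertex set of $K_n$, is supplied with no mechanism whatsoever; it is precisely the open content of the conjecture, so the proposal has a genuine and acknowledged gap rather than a repairable flaw.

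Two of the concrete steps you do assert are also shakier than stated. First, transposing the images of two leaves $\ell_1,\ell_2$ with distinct parents replaces the two edges $\phi(p_{\ell_1})\phi(\ell_1)$ and $\phi(p_{\ell_2})\phi(\ell_2)$ by $\phi(p_{\ell_1})\phi(\ell_2)$ and $\phi(p_{\ell_2})\phi(\ell_1)$; each replacement moves the sum by $0$ or $\pm 2$, so a single swap changes $c(E(F'))$ by up to $4$, not $2$, and in any case there is no reason the \emph{leaf-contribution's own average} (with the core fixed) is near $0$ --- the zero-sum hypothesis on $c$ averages to $0$ only over all copies of $F$, not over leaf placements with a fixed core. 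Second, your plan to ``localise'' Theorem~\ref{theorem3} via an involution that flips roughly half the $\Delta$ labels at $v$ misreads how that theorem is proved: the paper's argument is extremal counting on the graph $G$ with edge set $c^{-1}(1)$, showing that any graph of order $n$ with exactly $\frac{1}{2}\binom{n}{2}$ edges has a vertex $u$ with $\frac{n}{4}\leq d_G(u)\leq \frac{3}{4}n-1$, via a minimal-counterexample analysis forcing $V_+$ complete, $V_-$ independent, and the contradictory bounds $n_+<\frac{n}{2}<n_+$. There is no pairing of copies there to localise, so even the star-case ingredient of your plan would have to be rebuilt from scratch before one could attempt the simultaneous version.
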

Theorem \ref{theorem3} implies the following,
which can be considered a weak version of Conjecture \ref{conjecture2}
for forests of large maximum degree.
\begin{corollary}\label{corollary1}
If $c:E(K_n)\to \{ -1,1\}$ is a zero-sum labeling of $K_n$,
and $F$ is a spanning forest of $K_n$ with $\Delta(F)\geq \frac{n}{2}+1$,
then there is an isomorphic copy $F'$ of $F$ in $K_n$ with 
$|c(E(F'))|\leq \frac{n}{2}-1$.
\end{corollary}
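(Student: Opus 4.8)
The plan is to deduce the bound from Theorem~\ref{theorem3} by a direct embedding argument, exploiting that a spanning forest of very large maximum degree differs from a spanning star in only few edges.

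First I would invoke Theorem~\ref{theorem3} to fix a vertex $w$ of $K_n$ for which the spanning star centered at $w$ satisfies $\big|c(\{wx:x\neq w\})\big|\le \frac n2-1$. Writing $s:=c(\{wx:x\neq w\})$ and replacing $c$ by $-c$ if necessary (which changes neither the hypothesis nor the conclusion), I may assume $0\le s\le \frac n2-1$. Let $v_0$ be a vertex of $F$ with $\deg_F(v_0)=\Delta(F)=:d$, let $N$ be its neighborhood, and let $M=V(F)\setminus(N\cup\{v_0\})$, so that $|N|=d$ and $|M|=n-1-d$. The crucial numerical consequence of $d\ge \frac n2+1$ is that $F_0:=F-v_0$ has at most $r:=|E(F)|-d\le (n-1)-d=|M|\le \frac n2-2$ edges; thus $F$ arises from the star $K_{1,d}$ centered at $v_0$ by adding a forest with at most $|M|$ edges.

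Every copy $F'$ of $F$ with $v_0\mapsto w$ is described by an \emph{arbitrary} bijection $\psi$ from $V(F_0)$ onto $V(K_n)\setminus\{w\}$ (since $K_n$ is complete, no adjacency is ever violated), and its weight splits as $c(E(F'))=\sum_{u\in N}c(w\psi(u))+c(\psi(E(F_0)))$. The first (``star'') term depends only on how many images of $N$ land on positively- versus negatively-labelled edges at $w$; writing $p,m$ for the numbers of such edges ($p+m=n-1$, $p-m=s$), I can prescribe this term. If $s\le|M|$ then $m\ge d/2$, and placing $\lceil d/2\rceil$ vertices of $N$ on negative edges forces the star term into $\{-1,0\}$. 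If $s>|M|$ then I let $\psi(N)$ contain all $m$ negative edges, which forces the star term to equal $s-|M|\in\{1,\dots,\tfrac n2-1\}$. The second (``forest'') term is left entirely free and needs only the trivial bound $\big|c(\psi(E(F_0)))\big|\le r\le|M|$.

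It then remains to add the two contributions. In the first case the total lies in $[\,-1-r,\;1+r\,]\subseteq[-(\tfrac n2-1),\tfrac n2-1]$ because $r\le\frac n2-2$. In the second case the total lies in $[\,s-|M|-r,\;s-|M|+r\,]$, whose upper end is at most $s\le\frac n2-1$ (using $r\le|M|$) and whose lower end is at least $-r\ge-(\frac n2-1)$ (using $s-|M|\ge 0$). Either way $|c(E(F'))|\le\frac n2-1$. I expect the only point requiring care to be the feasibility of the prescribed sign pattern of $\psi(N)$, that is, checking there are enough positive and negative edges at $w$ to realise the chosen split; this follows from $p,m\ge\frac n4$ together with $d\le n-1$. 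The genuinely delicate case is $s>|M|$, where the star term cannot be pushed near $0$, and it is exactly here that the inequality $r\le|M|$ is needed to absorb the forest term within the remaining slack.
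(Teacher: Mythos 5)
Your proposal is correct and follows essentially the same route as the paper's own proof: both invoke Theorem~\ref{theorem3} to fix a vertex whose incident edge labels are balanced (your $p,m$ are the paper's $d_+,d_-$), map the maximum-degree vertex of $F$ there, split into the two cases $\Delta(F)\gtrless 2d_-$ (your $s\gtrless|M|$ is the same dichotomy), control the star term exactly as the paper does, and bound the remaining at most $n-1-\Delta(F)$ edges trivially. The only differences are notational.
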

The fact that, for the existence of a zero-sum perfect matching $\frac{n}{2}K_2$, 
the necessary divisibility conditions were sufficient,
inspired us to conjecture the following.

\begin{conjecture}\label{conjecture1}
Let the positive integers $k$ and $n$ be such that 
${n\choose 2}$ and $\frac{(k-1)n}{k}$ are both even integers.
If $T$ is a tree of order $k$, 
$c:E(K_n)\to \{ -1,1\}$ is a zero-sum labeling of $K_n$,
and $n$ is sufficiently large in terms of $k$,
then $K_n$ has a zero-sum $T$-factor,
that is, there is a zero-sum spanning forest $F$ of $K_n$ 
whose components are all isomorphic to $T$.
\end{conjecture}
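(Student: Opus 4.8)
The plan is to start from a $T$-factor of nonnegative sum and repeatedly push its sum down to $0$ by local moves that each change $c(E(F))$ by exactly $2$. Since $\frac{(k-1)n}{k}$ is assumed to be an even integer, we have $\frac{(k-1)n}{k}=n-\frac nk\in\mathbb{Z}$, hence $k\mid n$, so $T$-factors exist; and each $T$-factor has $\frac{(k-1)n}{k}$ edges, an even number, so $c(E(F))\in 2\mathbb{Z}$ for every $T$-factor $F$. By the edge-transitivity of $K_n$, every edge lies in the same number of copies of a fixed $T$-factor, so the average of $c(E(F))$ over all $T$-factors is $0$; in particular some $T$-factor $F^{+}$ has $c(E(F^{+}))\ge 0$. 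It therefore suffices to establish the following \emph{descent lemma}: if $F$ is a $T$-factor with $c(E(F))>0$, then some single local move yields a $T$-factor $F'$ with $c(E(F'))=c(E(F))-2$. Iterating the lemma from $F^{+}$ reaches a zero-sum $T$-factor in finitely many steps, since the sum is a nonnegative even integer decreasing by $2$ at each step.

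For the local move I would use leaf-exchanges. Every tree of order $k\ge 2$ has at least two leaves, so each copy $T_0$ of $T$ in $F$ carries a leaf-image $b$ joined to its neighbour $a$ by a leaf-edge $ab$. Given two (vertex-disjoint) copies with leaf-edges $ab$ and $a'b'$, swapping the two leaf-images — that is, replacing $ab,a'b'$ by $ab',a'b$ — yields two new vertex-disjoint copies of $T$ covering the same $2k$ vertices, and hence a new $T$-factor $F'$ with
$$c(E(F'))-c(E(F))=\big(c(ab')-c(ab)\big)+\big(c(a'b)-c(a'b')\big)\in\{-4,-2,0,2,4\}.$$
The task is to guarantee, whenever $c(E(F))>0$, a choice of two copies and leaves making this difference exactly $-2$. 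This is where $n$ large in terms of $k$ enters: $F$ has $\tfrac nk$ copies and thus $\Omega(n)$ leaf-images, so for a fixed $+1$ leaf-edge $ab$ there are many candidate partners $b'$, and the balancedness of $c$ together with $c(E(F))>0$ forces many vertices $b'$ with $c(ab')=-1$; one then wants to pick the partner leaf so that the second bracket $c(a'b)-c(a'b')$ vanishes.

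The crux, and the reason the statement is only conjectured, is forcing this change to be $-2$ rather than $-4$ or $0$ for an arbitrary tree against an adversarial labeling: a leaf-exchange moves two edges, so the sum can a priori jump by $4$, and one must show that the obstruction — every available exchange changing the sum by $0$ or $\pm4$ — cannot persist while the global sum is positive. For trees of small diameter the local picture around $a$ and $a'$ has only finitely many types, so a finite case analysis should settle the descent lemma, and I expect this is what makes the $P_3$- and $P_4$-factor cases tractable. For a general tree $T$ a promising alternative is an absorption scheme: using the largeness of $n$, reserve $O(k)$ disjoint copies of $T$ on a set whose internal edges can be recombined to realise every small even value, build an arbitrary $T$-factor on the remaining vertices, and finally reconfigure the reservoir to cancel the residual sum. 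The real difficulty then becomes extracting such a flexible reservoir from an adversarially labeled $K_n$, and it is this uniform control over all trees $T$ that I expect to be the main obstacle.
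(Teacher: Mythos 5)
You have not proved the statement, and no comparison with a full proof in the paper is possible, because in the paper this is Conjecture~\ref{conjecture1}: it is open, and the authors only verify it for $k=2$ (via the earlier perfect-matching results), for $k=3$ (Theorem~\ref{theorem1}), and for $T=P_4$ with $8\mid n$ and $n$ large (Theorem~\ref{theorem2}). Your preliminary reductions are correct: $\gcd(k-1,k)=1$ forces $k\mid n$, every $T$-factor has the even number $\frac{(k-1)n}{k}$ of edges so its sum is even, and the symmetry/averaging argument yields a factor $F^{+}$ with $c(E(F^{+}))\ge 0$ (the paper gets the sharper starting point $c(E(F))=2$ from Proposition~\ref{proposition1} together with parity and the $c\mapsto -c$ symmetry). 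But the entire content of the conjecture is the step you label the ``descent lemma,'' and you leave it unproven, as you yourself concede. Two concrete problems with your sketch: first, the claim that balancedness of $c$ ``forces many vertices $b'$ with $c(ab')=-1$'' for a fixed leaf-neighbour $a$ is false at the level of a single vertex --- in the extremal construction following Theorem~\ref{theorem3} every vertex has plus-degree either at least $\frac{3}{4}n-1$ or at most $\frac{1}{4}n$, so a particular $a$ may see almost only $+1$ edges; second, the hard case is $c(E(F))=2$, where a move of $-4$ lands on $-2$ and is useless, so you need an exchange changing the sum by \emph{exactly} $-2$, and pure leaf-swaps between two copies are likely too restrictive a move set --- even for $P_3$ and $P_4$ the paper needs exchanges that delete and add three or four edges and that rewire a path internally (e.g.\ replacing $u_2u_3$ by $u_1u_3$).

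It is worth noting how the paper circumvents the need for your descent lemma in the cases it can handle, since the logic is inverted rather than constructive: assuming no zero-sum factor exists, \emph{every} edge-exchange that would produce one is unavailable (``contradicting''), and this unavailability forces specific edge labels --- e.g.\ $c(u_1u_3)=1$ for every path $u_1u_2u_3$ of type $(1,1)$ or $(1,-1)$, and analogous statements for all pairs of path types (Claims~1--8 in the proofs of Theorems~\ref{theorem1} and~\ref{theorem2}). Summing these forced labels over all pairs of paths in the factor then yields $c(E(K_n))>0$, contradicting the hypothesis that $c$ is zero-sum; for $P_4$ this final count requires the quadratic optimization of Lemma~\ref{lemma1} and is the point where ``$n$ sufficiently large'' enters. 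So instead of exhibiting one good local move, the paper exploits the failure of \emph{all} local moves to gain global structural information about $c$ --- a strategy whose case analysis grows rapidly with the number of path/tree types, which is precisely why it has not been pushed beyond $P_4$. Your absorption idea is a reasonable alternative direction, but as you state it, extracting a flexible reservoir from an adversarially labeled $K_n$ is exactly the unsolved difficulty, not a reduction of it.
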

The divisibility conditions on $k$ and $n$ in Conjecture \ref{conjecture1}
ensure that $K_n$ and a $T$-factor of $K_n$ both have an even number of edges.
For $k=2$, the only possible choice for $T$ is $K_2$, 
that is, Conjecture \ref{conjecture1} concerns a zero-sum perfect matching, 
and it follows from the main results in~\cite{ehmora} and~\cite{kisi}.
We verify Conjecture \ref{conjecture1} for $k=3$,
that is, for $T$ equal to $P_3$, as well as for $T$ equal to $P_4$ and sufficiently large $n$.

\begin{theorem}\label{theorem1}
Let the positive integer $n$ be such that $3$ divides $n$ and ${n\choose 2}$ is even.
If $c:E(K_n)\to \{ -1,1\}$ is a zero-sum labeling of $K_n$,
then $K_n$ has a zero-sum $P_3$-factor.
\end{theorem}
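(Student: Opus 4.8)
\section*{Proof proposal}

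The plan is to translate the existence of a zero-sum $P_3$-factor into a condition on the underlying partition of $V(K_n)$ into triples, and then to exhibit a partition meeting it. A copy of $P_3$ on a triple $\{a,b,c\}$ keeps two of the three edges $ab,ac,bc$ and drops the third, so $c(E(P_3))\in\{c(ab)+c(ac),\,c(ab)+c(bc),\,c(ac)+c(bc)\}$. Writing $A,B,C,D$ for the numbers of triples of a fixed partition $P$ that carry exactly $0,1,2,3$ edges of label $-1$, a short case check gives that the sums $c(E(F'))$ realizable by the $P_3$-factors supported on $P$ form an arithmetic progression of step $2$ with minimum $2(2A+B)-\tfrac{2n}{3}$ and maximum $\tfrac{2n}{3}-2(2D+C)$. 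Hence $P$ supports a zero-sum $P_3$-factor if and only if $2A+B\le\tfrac n3$ and $2D+C\le\tfrac n3$, i.e. (using $A+B+C+D=\tfrac n3$) if and only if $A\le C+D$ and $D\le A+B$. In particular, any partition with no monochromatic triangle ($A=D=0$) works immediately: every triple then has a $+1$ and a $-1$ edge, these two edges share a vertex, and centering the path there yields $c(E(P_3))=0$.

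Next I would record that the two inequalities cannot fail at once: adding them gives $(2A+B-\tfrac n3)+(2D+C-\tfrac n3)=-(B+C)\le 0$. Thus at most one inequality fails, and by the symmetry $c\mapsto -c$ (which swaps $A\leftrightarrow D$ and $B\leftrightarrow C$, hence swaps the two inequalities) I may assume the failing one is $A\le C+D$. To start close to balance I invoke Proposition~\ref{proposition1}: since a $P_3$-factor has maximum degree $2$ it yields a factor $F$ with $|c(E(F))|\le 3$, hence $|c(E(F))|\le 2$ because a $P_3$-factor has the even number $\tfrac{2n}{3}$ of edges. If $c(E(F))=0$ we are done; otherwise the partition $P$ of $F$ has minimum realizable sum exactly $+2$, that is $2A+B=\tfrac n3+1$, so $A=C+D+1$: the first inequality fails by exactly one unit, while the second holds with slack $B+C+1$.

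I would remove this single unit by one local repartition, exploiting that $c$ is zero-sum. Since exactly half of the ${n\choose 2}$ edges are negative while only $n$ edges lie inside triples of $P$, there are plenty of negative edges, and in particular (average negative degree $\ge 4$ for $n\ge 9$) a vertex incident to two negative edges, i.e. a ``negative cherry'' $\{x,y,z\}$ with $xy,yz$ both $-1$. The idea is either to break a positive triangle of $P$ (of which there is at least one, as $A\ge 1$) by swapping one of its vertices against a vertex lying on a negative edge, or, when all negativity is spread out as isolated $-1$'s in otherwise positive triples, to assemble a negative cherry into a single new triple of type $C$ or $D$. Either move raises $C+D$ or lowers $A$, turning $A=C+D+1$ into $A\le C+D$; the ample slack $B+C+1$ in the second inequality absorbs the side effects on $2D+C$.

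The hard part is precisely this last step. A careless swap can merely relocate the positive triangle (if every incident edge is positive) or create a new monochromatic triangle among the displaced vertices, so I must verify that among the few admissible reshufflings of the six involved vertices at least one strictly decreases $2A+B$ while keeping $2D+C\le\tfrac n3$. I expect this to require a small case analysis organized by the signs of the edges between the two triples being recombined, together with the global edge count to guarantee a usable negative cherry or a negative edge reaching a positive triangle; the constraint $n\ge 9$ (the smallest admissible value, since ${n\choose 2}$ must be even and $3\mid n$) is exactly what gives enough room for these exchanges to exist.
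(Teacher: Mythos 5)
Your reformulation is correct and genuinely elegant: writing $A,B,C,D$ for the numbers of triples of a partition carrying $0,1,2,3$ negative edges, the realizable sums of $P_3$-factors on that partition are exactly the even numbers between $2(A-C-D)$ and $2(A+B-D)$, so a partition supports a zero-sum factor if and only if $A\le C+D$ and $D\le A+B$; and Proposition~\ref{proposition1} plus parity correctly reduces the theorem to repairing a partition with $A=C+D+1$. The gap is exactly where you locate ``the hard part'': you never prove that the repairing move exists, and the supporting heuristics you offer are false. The slack in the second inequality is $B+C+1$, which is not ``ample'' in the critical case $B=C=0$, and a single repartition of two triples does not always suffice.

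Here is a concrete obstruction. Let $n=9$, $T_1=\{u_1,u_2,u_3\}$, $T_2=\{v_1,v_2,v_3\}$, $T_3=\{x_1,x_2,x_3\}$, and let the negative edges be the three edges inside $T_3$, the nine edges between $T_1$ and $T_2$, and the six edges joining $u_1$ or $v_1$ to $T_3$; this labeling is zero-sum ($18$ of $36$ edges negative). The partition $\{T_1,T_2,T_3\}$ has $(A,B,C,D)=(2,0,0,1)$, its only realizable sum is $2$, and Proposition~\ref{proposition1} may well hand you a factor supported on it. Now check every pair move: repartitioning $T_1\cup T_2$ yields two triples of type $C$, hence $(0,0,2,1)$, violating $D'\le A'+B'$ (the overshoot you did not account for); repartitioning $T_1\cup T_3$ yields either types $A$ and $D$ (if the retained $u$-pair is $\{u_2,u_3\}$), giving $(2,0,0,1)$ again, or types $B$ and $B$ (if the retained $u$-pair contains $u_1$), giving $(1,2,0,0)$, violating $A'\le C'+D'$; and $T_2\cup T_3$ is symmetric. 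So no repartition of two triples works, although a zero-sum factor exists: the triples $\{u_i,v_i,x_i\}$, $i=1,2,3$, have types $D,B,B$. Reaching it requires recombining all three triples at once, and controlling the two leftover triples of such a move is precisely what your sketch omits --- indeed the alternative split $\{u_1,v_1,x_1\},\{u_2,u_3,x_2\},\{v_2,v_3,x_3\}$ of the same nine vertices has types $D,A,A$ and fails again. This is why the paper does not argue locally: it converts the assumption that no exchange produces a zero-sum factor into sign constraints between \emph{every} pair of paths (Claims~\ref{P3:claim1a}--\ref{P3:claim1c}), derives a contradiction by summing all of these against the global identity $c(E(K_n))=0$ (Claim~\ref{P3:claim1}), and even then needs a second, symmetric round applied to a factor of sum $-2$. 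Some such global use of the zero-sum hypothesis is the missing half of your argument; as written, the proposal proves the clean criterion but not the theorem.
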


\begin{theorem}\label{theorem2}
Let the positive integer $n$ be such that $8$ divides $n$.
If $c:E(K_n)\to \{ -1,1\}$ is a zero-sum labeling of $K_n$ and $n$ is sufficiently large,
then $K_n$ has a zero-sum $P_4$-factor.
\end{theorem}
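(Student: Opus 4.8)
The plan is to start from an arbitrary $P_4$-factor and repeatedly change its label sum by exactly $2$ in the direction of $0$ until it vanishes. Since $8\mid n$, a $P_4$-factor of $K_n$ has $\frac n4$ components, an even number, and as each $P_4$ contributes an odd value in $\{-3,-1,1,3\}$ to $c(E(F))$, every $P_4$-factor $F$ satisfies $c(E(F))\equiv 0\pmod 2$; thus $0$ is not excluded by parity and is the only attainable value of minimum modulus. It therefore suffices to prove the following local statement, which I will call the \emph{adjustment step}: if $n$ is large enough and $F$ is a $P_4$-factor with $c(E(F))>0$, then there is a $P_4$-factor $F'$ with $c(E(F'))=c(E(F))-2$ (and symmetrically for $c(E(F))<0$). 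Granting this, one simply iterates: the sum lies in $\left[-\tfrac{3n}{4},\tfrac{3n}{4}\right]$, moves by exactly $2$ toward $0$ at each step, and hence reaches $0$ after finitely many steps, yielding the desired zero-sum $P_4$-factor. Note that this makes the averaging observation from the introduction unnecessary; the entire content is concentrated in the adjustment step.

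For the adjustment step I would use two kinds of reconfiguration moves, both of which change $c(E(F))$ by an even amount. The cheapest is an \emph{internal} move: on the four vertices of a single component $p_1p_2p_3p_4$ one reorders the path, which replaces some of its three edges by others among the same four vertices and changes $c(E(F))$ by exactly the change of that component's value. A short case analysis on the six edge-labels inside a $4$-set shows that, unless those four vertices span a monochromatic $K_4$, the set of path-values realizable on them is gap-free among odd values, so that from any non-minimum value one can descend by exactly $2$; in that case a single internal move already realizes the adjustment step. The second kind is an \emph{exchange} move between two components $p_1p_2p_3p_4$ and $q_1q_2q_3q_4$, swapping two endpoints and thereby replacing the end-edges $p_3p_4,q_3q_4$ by $p_3q_4,q_3p_4$; its effect on the sum is $\big(c(p_3q_4)-c(p_3p_4)\big)+\big(c(q_3p_4)-c(q_3q_4)\big)$, and a value of $-2$ is obtained precisely when one bracket is $0$ and the other is $-2$, i.e.\ when a positively attached endpoint $q_4$ can be reattached negatively (so $c(q_3q_4)=1$ and $c(q_3p_4)=-1$) while the partner edge is preserved (so $c(p_3q_4)=c(p_3p_4)$).

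The main obstacle is to guarantee that at least one of these moves realizes a decrease of exactly $2$ whenever $c(E(F))>0$. If every component were locally minimal, so that all internal moves fail, then each $4$-set would be ``positive-heavy''; since $c(E(K_n))=0$ there must nevertheless be $-1$-edges in $K_n$ (indeed, with $c(E(F))>0$ the $\tfrac n2$ end-edges cannot all be $-1$, so a positively attached endpoint exists), and I would use the abundance of components and cross-edges available for large $n$ to locate an exchange move with the required $(0,-2)$-pattern. The delicate point is handling vertices incident to an almost monochromatic positive star --- exactly the rigid, extremal behaviour responsible for the tightness of Theorem~\ref{theorem3} --- since near such a vertex the two endpoint conditions $c(q_3p_4)=-1$ and $c(p_3q_4)=c(p_3p_4)$ are hard to satisfy simultaneously. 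Ruling out that all of these local conditions fail at once, which is precisely where the hypothesis that $n$ is sufficiently large enters, is the crux of the argument; for small $n$ such rigid configurations can genuinely obstruct a step-$2$ descent, which is why the theorem is only claimed asymptotically.
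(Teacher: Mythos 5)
Your descent framework is sound in outline---the parity observation is correct, and iterating a step-$2$ decrease toward $0$ would indeed prove the theorem---but the proposal contains no proof of the one statement that carries all of the mathematical weight, namely the adjustment step. You yourself name it ``the crux'' and then stop: you describe the two kinds of moves you would try and the rigid near-monochromatic-star configurations that must be ruled out, but ruling them out \emph{is} the theorem. In the paper this crux occupies essentially the entire proof: one starts from a factor with $c(E(F))=2$ (obtained from Proposition~\ref{proposition1} plus parity, so that a single descent step suffices), assumes that \emph{no} edge-exchange produces a zero-sum factor, and then shows through a long case analysis (the chord conditions (\ref{e1}) and (\ref{e2}) together with Claims~\ref{claim1}--\ref{claim8}) that this failure forces all or most of the $16$ edges between typical pairs of components to carry label $+1$; a quadratic optimization (Lemma~\ref{lemma1}) over the type distribution $(x_1,\ldots,x_5)$ then yields $c(E(K_n))\geq \frac{5}{256}n^2-\frac{7}{2}n>0$ for large $n$, contradicting $c(E(K_n))=0$. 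Nothing in your proposal plays the role of this global counting argument; ``abundance of components and cross-edges'' is not by itself a mechanism for producing the required $(0,-2)$-pattern, and in the extremal-type labelings you allude to, local searches do genuinely fail pairwise, so that only a global count of positive edges closes the argument.

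Two further structural problems. First, your move set is strictly poorer than what the paper needs: internal reorderings of one component and a single endpoint swap exchange at most two or three edges within at most two paths, whereas the paper's contradicting edge-exchanges remove and add up to four edges and sometimes involve three paths (e.g.\ $-\{u_1u_2,v_1v_2,w_3w_4\}+\{u_1v_2,u_2v_1,w_1w_4\}$ in the proof of Claim~\ref{claim1}); it is exactly these richer exchanges that make the structural conclusions strong enough for the count to succeed, so even if you supplied a counting argument, restricting to your two moves would likely leave it too weak. Second, by discarding the averaging start you commit yourself to proving the adjustment step for \emph{every} positive value of $c(E(F))$, a strictly stronger assertion than the single implication ``if some factor has sum $2$, then some factor has sum $0$'' that the paper establishes; invoking Proposition~\ref{proposition1} costs nothing and reduces the whole problem to one descent step from sum $2$, which is where you should concentrate the (currently missing) work.
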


All proofs are given in the next section.

\section{Proofs}\label{section2}

We begin with the proof of Proposition \ref{proposition1}.

\begin{proof}[Proof of Proposition \ref{proposition1}]
Clearly, we may assume that $\Delta(F)\geq 1$.
Let $u$ and $v$ be two distinct vertices of $F$.
If one of $u$ or $v$ has degree at most $1$ in $F$,
then removing all edges incident with $u$ or $v$,
and adding all edges between $u$ and $N_F(v)$ as well as all edges between $v$ and $N_F(u)$
results in an isomorphic copy of $F$, in which $u$ and $v$ exchanged their roles.
Note that, for this operation, we removed at most $\Delta(F)+1$ edges
and added at most $\Delta(F)+1$ edges.
Similarly, if $u$ and $v$ both have degree more than $1$ in $F$, 
and $w$ has degree at most $1$ in the forest $F$,
then exchanging the roles 
firstly of $u$ and $w$,
secondly of $u$ and $v$, and
finally of $v$ and $w$ by similar operations as above,
the vertices $u$ and $v$ exchanged their roles,
the vertex $w$ retains its role, 
and we removed and added at most $\Delta(F)+1$ edges for each of the three substeps.
Altogether, these observations imply that we can transform 
every isomorphic copy of $F$ in $K_n$
into every other isomorphic copy of $F$ in $K_n$
by iteratively removing and adding at most $\Delta(F)+1$ edges.
Together with the two simple observations stated before Proposition \ref{proposition1},
the desired result follows.
\end{proof}
The proof idea for Proposition \ref{proposition1} also implies the following statement:
{\it If $c:E(K_n)\to \{ -1,1\}$ is a zero-sum labeling of $K_n$,
and $H$ is a spanning subgraph of $K_n$,
then there is an isomorphic copy $H'$ of $H$ in $K_n$ with 
$|c(E(H'))|\leq \Delta(H)+\delta(H)$, 
where $\delta(H)$ is the minimum degree of $H$.}

We proceed to the proof of our first main result.

\begin{proof}[Proof of Theorem \ref{theorem3}]
Considering the graph $G$ with vertex set $V(K_n)$ and edge set $c^{-1}(1)$
instead of $(K_n,c)$,
it suffices to show that
$$\mbox{\it every graph $G$ of order $n$ 
with exactly $\frac{1}{2}{n\choose 2}$ edges 
has a vertex $u$ with 
$\frac{n}{4}\leq d_G(u)\leq \frac{3}{4}n-1$,}$$
which is easily verified for $n\leq 4$.

Therefore, for a contradiction, 
suppose that $G$ is a graph of order $n$ with $n>4$
that has exactly $m=\frac{1}{2}{n\choose 2}$ edges 
such that there is some $\alpha<\frac{1}{4}$
such that, for every vertex $u$,
either $d_G(u)\leq \alpha n$
or $d_G(u)\geq (1-\alpha)n-1$.
Let 
$V_+=\{ u\in V(G):d_G(u)\geq (1-\alpha)n-1\}$
and $V_-=V(G)\setminus V_+$.
Since $(1-\alpha)n-1>\alpha n$, we have
$V_-=\{ u\in V(G):d_G(u)\leq \alpha n\}.$
We may assume that among all counterexamples,
the graph $G$ is chosen such that 
$$\sum\limits_{u\in V_+}d_G(u)={n\choose 2}-\sum\limits_{u\in V_-}d_G(u)$$
is as large as possible.

Let $n_+=|V_+|$. 
Clearly, $|V_-|=n-n_+$.

If $n_+\geq (1-\alpha)n$, then 
$$\frac{n(n-1)}{2}
=2m
\geq\sum\limits_{u\in V_+}d_G(u)
\geq (1-\alpha)n\big((1-\alpha)n-1\big)
>\frac{3n}{4}\left(\frac{3n}{4}-1\right),$$
which implies $0>\frac{n}{4}\left(\frac{n}{4}-1\right)$
contradicting $n>4$.
Hence, 
$$n_+<(1-\alpha)n,$$
which implies that every vertex in $V_+$ has a neighbor in $V_-$.

If $V_+$ contains two non-adjacent vertices $u$ and $v$, 
and $w$ is a neighbor of $u$ in $V_-$,
then $G'=G-uw+uv$ is a counterexample 
contradicting the choice of $G$.
Hence, 
$$\mbox{$V_+$ is complete.}$$
By symmetry between $G$ and its complement $\bar{G}$,
the choice of $G$ also implies that  
$$\mbox{$V_-$ is independent.}$$
Now, let $d_+$ be the average degree of the vertices in $V_+$,
and let $d_-$ be the average degree of the vertices in $V_-$.
Clearly, $d_+\geq (1-\alpha)n-1>\frac{3}{4}n-1$ and $d_-\leq \alpha n<\frac{1}{4}n$.
The number of edges in $G$ between $V_+$ and $V_-$ equals
\begin{eqnarray}\label{e01}
d_+n_+-n_+(n_+-1)=d_-(n-n_+),
\end{eqnarray}
and the sum of all vertex degrees equals
\begin{eqnarray}\label{e02}
d_+n_++d_-(n-n_+)={n\choose 2}.
\end{eqnarray}
Adding (\ref{e01}) and (\ref{e02}) implies
$\frac{1}{2}n(n-1)+n_+(n_+-1)=2d_+n_+>2\left(\frac{3}{4}n-1\right)n_+$
or, equivalently,
$\left(n_+-\frac{n}{2}\right)\left(n_+-(n-1)\right)>0$,
which implies 
$$n_+<\frac{n}{2}.$$
Subtracting (\ref{e01}) from (\ref{e02}) implies
$\frac{1}{2}n(n-1)-n_+(n_+-1)=2d_-(n-n_+)<\frac{1}{2}n(n-n_+)$
or, equivalently,
$\left(n_+-\frac{n}{2}\right)\left(n_+-1\right)>0$,
which implies 
$$n_+>\frac{n}{2}.$$
The contradiction $\frac{n}{2}<n_+<\frac{n}{2}$ completes the proof.
\end{proof}
If the positive integer $n$ is such that $n-1$ is a multiple of $4$,
$$V(K_n)=\{ u_i:i\in I \}\cup \{ v_i:i\in I \}\cup \{ w\}
\,\,\,\,\,\,\,\,\mbox{ for }\,\,\,\,\,\,\,\,
I=\left\{ 1,\ldots,\frac{n-1}{2}\right\},$$
and $c:E(K_n)\to \{ -1,1\}$ is such that the graph 
$G$ with vertex set $V(K_n)$ and edge set $c^{-1}(1)$ satisfies
\begin{eqnarray*}
E(G) & = & \left\{ u_iv_j:i,j\in I\mbox{ with $i+j$ even}\right\}
\cup \left\{ u_iw:i\in I\right\}
\cup \left\{ v_iw:i\in I\mbox{ with $i$ even}\right\}\\
&& \cup \left(\left\{ u_iu_j:i,j\in I\mbox{ with $i$ and $j$ distinct}\right\}\setminus 
\left\{ u_1u_2,u_3u_4,\ldots,u_{\frac{n-3}{2}}u_{\frac{n-1}{2}}\right\}\right),
\end{eqnarray*}
then a copy $T$ of $K_{1,n-1}$ in $K_n$ centered in $c$ satisfies
$$
|c(E(T))|=
\begin{cases}
\frac{n-5}{2}, & \mbox{ if either $c=u_i$ or $i$ is even and $c=v_i$, and}\\
\frac{n-1}{2}, & \mbox{ if either $c=w$ or $i$ is odd and $c=v_i$}.
\end{cases}
$$
It is not hard to derive Corollary \ref{corollary1} from Theorem \ref{theorem3}.
\begin{proof}[Proof of Corollary \ref{corollary1}]
By Theorem \ref{theorem3} and symmetry,
we may assume the existence of a vertex $u$ of $K_n$
with $\frac{n}{4}\leq d_-\leq d_+\leq \frac{3}{4}n-1$,
where $d_-$ and $d_+$ are the numbers of edges $e$ of $K_n$
that are incident with $u$, and satisfy $c(e)=-1$ and $c(e)=1$ , 
respectively.
Let $v$ be a vertex of $F$ of maximum degree $\Delta(F)$.
We consider an isomorphic copy $F'$ of $F$ in $K_n$, 
where $u$ has the role of $v$.
If $\Delta(F)\geq 2d_-$, 
then we can clearly ensure that $F'$ contains all 
$d_-$ edges $e$ at $u$ with $c(e)=-1$
as well as $d_-$ further edges $e'$ at $u$ with $c(e')=1$,
which implies $|c(E(F'))|\leq \left(n-1-\frac{n}{4}\right)-\frac{n}{4}=\frac{n}{2}-1$.
If $\Delta(F)<2d_-$, 
then we can clearly ensure that $|c(E_u)|\leq 1$,
where $E_u$ is the set of edges of $F'$ 
that are incident with $u$,
which implies $|c(E(F'))|\leq \left(n-1-\Delta(F)\right)+1\leq \frac{n}{2}-1$.
\end{proof}
We proceed to the proof of our further main results.

\begin{proof}[Proof of Theorem \ref{theorem1}]
The proof is by contradiction, that is, 
we assume that $n$ and $c$ have the stated properties
but that $K_n$ has no zero-sum $P_3$-factor.
By Proposition \ref{proposition1},
there is a $P_3$-factor $F$ in $K_n$ with $|c(E(F))|\leq \Delta(P_3)+1=3$.
Since $F$ has an even number of edges, the total sum $c(E(F))$ of the edge labels is even,
which implies $|c(E(F))|\leq 2$.
Possibly considering $-c$ instead of $c$, that is, exchanging the labels $-1$ and $1$,
we may assume that $c(E(F))=2$.

For a path $P:u_1u_2u_3$ in $K_n$ the \textit{type} of $P$ is $\left(c(u_1u_2),c(u_2u_3)\right)$. 
Up to symmetry, there are exactly the three following types:
$$
\big(1,1\big) \,\,\,\, \big(1,-1\big) \,\,\,\, \big(-1,-1\big). 
$$
Since $c(E(F))=2$, at least one path in $F$ is of type $(1,1)$.

If $E_{\rm out}$ and $E_{\rm in}$ are two sets of edges of $K_n$ such that 
$(V(K_n),(E(F)\setminus E_{\rm out})\cup E_{\rm in})$
is a $P_3$-factor $F'$ of $K_n$ with $c(F')=0$,
then we say that the {\it edge-exchange $-E_{\rm out}+E_{\rm in}$} is {\it contradicting}.
Clearly, the existence of a contradicting edge-exchange yields a contradiction.

If $u_1u_2u_3$ is a path of type $(1,1)$ or $(1,-1)$ in $F$,
then, considering the edge exchange $-\{u_1u_2\}+\{u_1u_3\}$,
it follows that $c(u_1u_3)=1$. 
Note that this implies a complete symmetry 
between the vertices of every path of type $(1,1)$ in $F$,
as well as a symmetry between the vertices $u_2$ and $u_3$
of every path $u_1u_2u_3$ of type $(1,-1)$ in $F$.
Later in this proof we exploit another natural symmetry 
based on the fact that $c$ is zero-sum if and only if $-c$ is zero-sum.
In particular, if $F'$ is a $P_3$-factor in $K_n$ with $c(E(F'))=-2$,
then, exchanging the roles of labels $1$ and $-1$,
statements that hold for $F$
also hold in a symmetric version for $F'$.

\begin{claim}\label{P3:claim1a}
If $P:u_1u_2u_3$ is a path of type $(1,1)$ in $F$ and
$R:w_1w_2w_3$ is a path of type $(1,-1)$ in $F$,
then $c(e)=1$ for every edge $e$ between $V(P)$ and $V(R)$.
\end{claim}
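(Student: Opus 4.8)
We have a type-$(1,1)$ path $P:u_1u_2u_3$ and a type-$(1,-1)$ path $R:w_1w_2w_3$ in $F$, and we want to show every edge between $V(P)$ and $V(R)$ carries label $1$. The plan is to assume some edge $e$ between the two paths has $c(e)=-1$ and produce a \emph{contradicting edge-exchange}, i.e.\ rewire $V(P)\cup V(R)$ into two new $P_3$'s whose combined label-sum is $-2$ (while leaving the rest of $F$ untouched), thereby turning the global sum $c(E(F))=2$ into $0$. Since $P$ contributes $+2$ and $R$ contributes $0$ to $c(E(F))$, the six vertices $V(P)\cup V(R)$ currently account for a local sum of $+2$; to get a contradicting exchange I need a repartition of these six vertices into two paths whose local sum is $0$, so that replacing the old four edges by the new four edges drops the total by $2$.

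\textbf{Exploiting the symmetries.} Before hunting for the bad edge, I would invoke the symmetries already established in the proof. On $P$ all three vertices are interchangeable (the ``complete symmetry'' noted for type-$(1,1)$ paths), and on $R$ the two endpoints $w_2,w_3$ are interchangeable (the symmetry between $u_2,u_3$ on a type-$(1,-1)$ path). These symmetries cut down the number of distinct cases for where the label-$(-1)$ cross edge $e$ can sit: up to relabeling, $e$ joins some $u_i$ to either $w_1$ or $w_2$. So I reduce to just a couple of configurations for the location of a putative negative cross edge.

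\textbf{Building the contradicting exchange.} For each surviving case I would explicitly write down a new pair of $P_3$'s on $\{u_1,u_2,u_3,w_1,w_2,w_3\}$. The key ingredient is that I already know many labels among these six vertices: the two $P$-edges and both edges $u_iu_3$-type chords of $P$ are $+1$ (type $(1,1)$ forces the chord to be $+1$), the $R$-edges are $+1,-1$, and I am assuming one cross edge is $-1$. Using a negative cross edge together with the two $+1$ path-edges of $P$ I can assemble a new path through both a $P$-vertex and an $R$-vertex whose contribution is lowered, and pair it with a second path on the remaining three vertices; I then check the four new edges sum to $-2$ less than the four old edges. The natural construction is to route one new $P_3$ so that it uses the $-1$ cross edge as its central or pendant edge, pulling the local sum down from $+2$ to $0$. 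Verifying that each rewiring is a legitimate $P_3$-factor (each new component is a path on three vertices, all six vertices used exactly once) is routine.

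\textbf{Main obstacle.} The delicate point is not any single calculation but ensuring the case analysis is genuinely exhaustive after the symmetry reduction, and that in \emph{every} case the negative cross edge can actually be incorporated into a valid repartition with local sum $0$. In principle a negative cross edge could be ``isolated'' in a position where the other labels I can control do not suffice to build a deficit-$2$ pair; the real content of the claim is that the forced $+1$ chords of $P$ (and the known $R$-labels) always give enough freedom to absorb it. I expect that writing out the two or three cross-edge positions and exhibiting one contradicting exchange per case is the crux, and that the symmetries make the bookkeeping manageable.
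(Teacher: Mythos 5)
Your overall strategy---rewire the six vertices $V(P)\cup V(R)$ into two new $P_3$'s whose edge sum is $2$ less, then invoke the complete symmetry of $P$ and the $w_2\leftrightarrow w_3$ symmetry of $R$---is exactly the paper's strategy, and your symmetry reduction to the two positions $u_1w_1$ and $u_1w_2$ is sound. The gap is your expectation that exhibiting \emph{one contradicting exchange per case} suffices, where your cases are indexed only by the position of a single negative cross edge. That is provably not enough. Take the case $c(u_1w_1)=-1$. The only edges whose labels you actually know are the three edges inside $V(P)$ (all $+1$), the three inside $V(R)$ ($w_1w_2=w_1w_3=+1$, $w_2w_3=-1$), and $u_1w_1$ itself. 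Now check every repartition of the six vertices into two $P_3$'s that uses only known edges: either it separates the $u$'s from the $w$'s, giving total sum $2$ or $4$; or the path containing $u_1w_1$ is $u_ju_1w_1$ or $u_1w_1w_j$, of sum $0$, and the remaining three vertices span at most one known edge ($w_2w_3$ or $u_2u_3$ respectively), so the second path must use a cross edge of unknown label and its sum cannot be certified. Hence no fixed exchange yields the contradiction in this case, and the same happens for the position $u_1w_2$. This is precisely the ``isolated negative edge'' scenario you flag as the main obstacle; it genuinely occurs, and flagging it is not resolving it.

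The paper closes this hole by branching \emph{jointly} on two cross edges, $c(u_1w_1)$ and $c(u_3w_3)$, with a different exchange in each nontrivial case: if $c(u_1w_1)=c(u_3w_3)=-1$, remove $\{u_1u_2,w_2w_3\}$ (labels summing to $0$) and add $\{u_1w_1,u_3w_3\}$ (summing to $-2$); if exactly one of the two is $-1$, remove $\{u_2u_3,w_1w_2\}$ (summing to $2$) and add $\{u_1w_1,u_3w_3\}$ (summing to $0$). In all three cases the total drops by exactly $2$, forcing $c(u_1w_1)=c(u_3w_3)=1$, and only then do the symmetries spread the conclusion to all nine edges. So your plan needs a second layer of case analysis on the label of a second, a priori unknown, cross edge; as written, the crux step of your argument cannot be carried out.
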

\begin{proof}[Proof of Claim \ref{P3:claim1a}]
If $c(u_1w_1)=c(u_3w_3) = -1$, 
then the edge exchange $-\{u_1u_2,w_2w_3\}+\{u_1w_1,u_3w_3\}$ 
is contradicting.
If $c(u_1w_1)=-1$ and $c(u_3w_3) =1$, 
then the edge exchange $-\{u_2u_3,w_1w_2\}+\{u_1w_1,u_3w_3\}$ 
is contradicting. 
See Figure \ref{figee} for an illustration.
If $c(u_1w_1)=1$ and $c(u_3w_3) = -1$, 
then the edge exchange $-\{u_2u_3,w_1w_2\}+\{u_1w_1,u_3w_3\}$ 
is contradicting.
By the symmetry between the vertices of $P$
as well as between $w_2$ and $w_3$, 
it follows that $c(e)=1$ for every edge $e$ between $V(P)$ and $V(R)$.
\end{proof}

\begin{figure}[h]
\begin{center}
{\footnotesize
\unitlength 0.85mm 
\linethickness{0.4pt}
\ifx\plotpoint\undefined\newsavebox{\plotpoint}\fi 
\begin{picture}(178,64)(0,0)
\put(5,5){\circle*{2}}
\put(5,45){\circle*{2}}
\put(135,5){\circle*{2}}
\put(135,45){\circle*{2}}
\put(5,20){\circle*{2}}
\put(5,60){\circle*{2}}
\put(135,20){\circle*{2}}
\put(135,60){\circle*{2}}
\put(25,5){\circle*{2}}
\put(25,45){\circle*{2}}
\put(155,5){\circle*{2}}
\put(155,45){\circle*{2}}
\put(25,20){\circle*{2}}
\put(25,60){\circle*{2}}
\put(155,20){\circle*{2}}
\put(155,60){\circle*{2}}
\put(45,5){\circle*{2}}
\put(45,45){\circle*{2}}
\put(175,5){\circle*{2}}
\put(175,45){\circle*{2}}
\put(45,20){\circle*{2}}
\put(45,60){\circle*{2}}
\put(175,20){\circle*{2}}
\put(175,60){\circle*{2}}
\put(5,5){\line(1,0){20}}
\put(5,45){\line(1,0){20}}
\put(135,45){\line(1,0){20}}
\put(5,20){\line(1,0){20}}
\put(5,60){\line(1,0){20}}
\put(25,5){\line(1,0){20}}
\put(155,5){\line(1,0){20}}
\put(25,45){\line(1,0){20}}
\put(25,20){\line(1,0){20}}
\put(25,60){\line(1,0){20}}
\put(135,20){\line(1,0){20}}
\put(155,60){\line(1,0){20}}
\put(5,24){\makebox(0,0)[cc]{$u_1$}}
\put(5,64){\makebox(0,0)[cc]{$u_1$}}
\put(135,24){\makebox(0,0)[cc]{$u_1$}}
\put(135,64){\makebox(0,0)[cc]{$u_1$}}
\put(5,1){\makebox(0,0)[cc]{$w_1$}}
\put(5,41){\makebox(0,0)[cc]{$w_1$}}
\put(135,1){\makebox(0,0)[cc]{$w_1$}}
\put(135,41){\makebox(0,0)[cc]{$w_1$}}
\put(25,24){\makebox(0,0)[cc]{$u_2$}}
\put(25,64){\makebox(0,0)[cc]{$u_2$}}
\put(155,24){\makebox(0,0)[cc]{$u_2$}}
\put(155,64){\makebox(0,0)[cc]{$u_2$}}
\put(25,1){\makebox(0,0)[cc]{$w_2$}}
\put(25,41){\makebox(0,0)[cc]{$w_2$}}
\put(155,1){\makebox(0,0)[cc]{$w_2$}}
\put(155,41){\makebox(0,0)[cc]{$w_2$}}
\put(45,24){\makebox(0,0)[cc]{$u_3$}}
\put(45,64){\makebox(0,0)[cc]{$u_3$}}
\put(175,24){\makebox(0,0)[cc]{$u_3$}}
\put(175,64){\makebox(0,0)[cc]{$u_3$}}
\put(45,1){\makebox(0,0)[cc]{$w_3$}}
\put(45,41){\makebox(0,0)[cc]{$w_3$}}
\put(175,1){\makebox(0,0)[cc]{$w_3$}}
\put(175,41){\makebox(0,0)[cc]{$w_3$}}
\put(0,20){\makebox(0,0)[cc]{$P$}}
\put(0,60){\makebox(0,0)[cc]{$P$}}
\put(0,5){\makebox(0,0)[cc]{$R$}}
\put(0,45){\makebox(0,0)[cc]{$R$}}
\put(14,22){\makebox(0,0)[cc]{$+$}}
\put(14,62){\makebox(0,0)[cc]{$+$}}
\put(14,3){\makebox(0,0)[cc]{$+$}}
\put(14,43){\makebox(0,0)[cc]{$+$}}
\put(144,43){\makebox(0,0)[cc]{$+$}}
\put(35,22){\makebox(0,0)[cc]{$+$}}
\put(35,62){\makebox(0,0)[cc]{$+$}}
\put(145,22){\makebox(0,0)[cc]{$+$}}
\put(165,62){\makebox(0,0)[cc]{$+$}}
\put(35,3){\makebox(0,0)[cc]{$-$}}
\put(165,3){\makebox(0,0)[cc]{$-$}}
\put(35,43){\makebox(0,0)[cc]{$-$}}
\put(5,20){\line(0,-1){15}}
\put(5,60){\line(0,-1){15}}
\put(135,20){\line(0,-1){15}}
\put(135,60){\line(0,-1){15}}
\put(45,20){\line(0,-1){15}}
\put(45,60){\line(0,-1){15}}
\put(175,20){\line(0,-1){15}}
\put(175,60){\line(0,-1){15}}
\put(45,5){\line(0,1){0}}
\put(45,45){\line(0,1){0}}
\put(175,5){\line(0,1){0}}
\put(175,45){\line(0,1){0}}
\put(2,12){\makebox(0,0)[cc]{$-$}}
\put(2,52){\makebox(0,0)[cc]{$-$}}
\put(48,12){\makebox(0,0)[cc]{$+$}}
\put(48,52){\makebox(0,0)[cc]{$-$}}
\put(132,12){\makebox(0,0)[cc]{$-$}}
\put(132,52){\makebox(0,0)[cc]{$-$}}
\put(178,12){\makebox(0,0)[cc]{$+$}}
\put(178,52){\makebox(0,0)[cc]{$-$}}
\put(90,15){\makebox(0,0)[cc]{$-\{ u_2u_3,w_1w_2\}+\{ u_1w_1,u_3w_3\}$}}
\put(90,55){\makebox(0,0)[cc]{$-\{ u_1u_2,w_2w_3\}+\{ u_1w_1,u_3w_3\}$}}
\put(55,12){\vector(1,0){70}}
\put(55,52){\vector(1,0){70}}
\end{picture}
}
\end{center}
\caption{Two contradicting edge-exchanges.}\label{figee}
\end{figure}
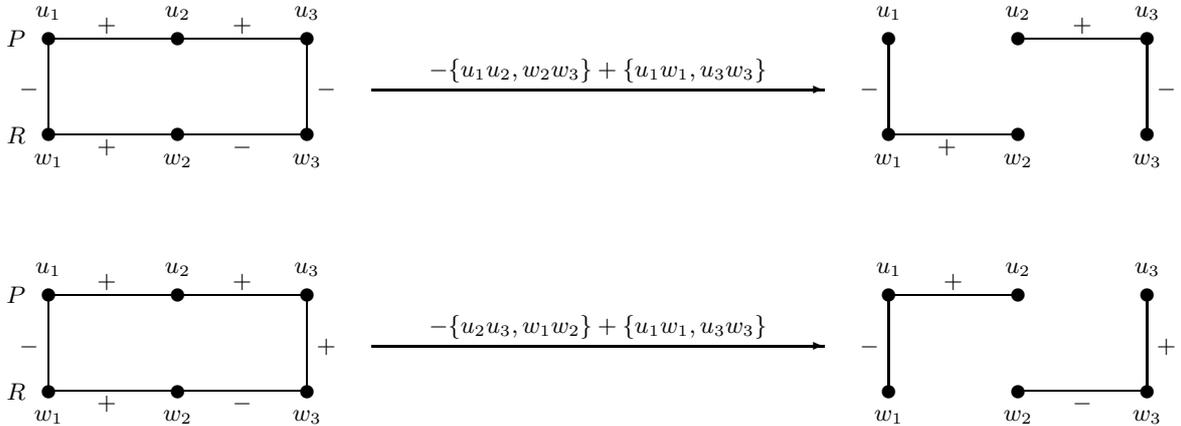

\begin{claim}\label{P3:claim1b}
If $P:u_1u_2u_3$ is a path of type $(1,1)$ in $F$ and
$S:x_1x_2x_3$ is a path of type $(-1,-1)$ in $F$,
then $c(e)=1$ 
for at least six of the nine edges $e$ 
between $V(P)$ and $V(S)$.
\end{claim}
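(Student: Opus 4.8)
The plan is to argue by contradiction in the same spirit as Claim \ref{P3:claim1a}: assuming that at least four of the nine edges between $V(P)$ and $V(S)$ carry the label $-1$, I would exhibit a contradicting edge-exchange supported entirely on the six vertices $u_1,u_2,u_3,x_1,x_2,x_3$. Since $c(E(P))=2$ and $c(E(S))=-2$, replacing the two paths $P$ and $S$ by two vertex-disjoint copies $F_1',F_2'$ of $P_3$ on the same six vertices yields a $P_3$-factor $F'$ with
$$c(E(F'))=c(E(F))-c(E(P))-c(E(S))+c(E(F_1'))+c(E(F_2'))=c(E(F))+c(E(F_1'))+c(E(F_2')),$$
so the exchange is contradicting precisely when $c(E(F_1'))+c(E(F_2'))=-2$; as these are four edges, this means exactly three of them are labeled $-1$ and one is labeled $+1$. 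Two structural facts are available for free: all three edges inside $\{u_1,u_2,u_3\}$ are $+1$ (the edge $u_1u_3$ because $P$ has type $(1,1)$), and $c(x_1x_2)=c(x_2x_3)=-1$. I would also record the symmetry I am allowed to use, namely the full symmetric group on $\{u_1,u_2,u_3\}$ together with the transposition of $x_1$ and $x_3$ fixing $x_2$.

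Next I would note that the two new paths cannot split the six vertices as $\{u_1,u_2,u_3\}$ and $\{x_1,x_2,x_3\}$, since any Hamiltonian path inside $\{u_1,u_2,u_3\}$ has label-sum $+2$, incompatible with a total of $-2$. Hence one of $F_1',F_2'$ carries two of the $u_i$ and one $x_j$, and the other carries one $u_i$ and two of the $x_j$. With this in mind I would use two explicit exchange templates, distinguishing on whether the center $x_2$ sends a negative edge into $\{u_1,u_2,u_3\}$. If $x_2$ has a negative neighbour $u_d$ and some endpoint $x_l\in\{x_1,x_3\}$ has a negative neighbour $u_b\neq u_d$, then, writing $u_a$ for the remaining $u$ and $x_m$ for the remaining endpoint, the paths $u_au_bx_l$ (sum $1+(-1)=0$) and $u_dx_2x_m$ (sum $(-1)+(-1)=-2$) form a contradicting exchange. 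If instead $x_2$ sends no negative edge into $\{u_1,u_2,u_3\}$, i.e.\ $c(x_2u_i)=1$ for all $i$, then some endpoint $x_l$ has two negative neighbours $u_a,u_b$, and the paths $u_ax_lu_b$ (sum $-2$) and $u_dx_2x_m$ (sum $1+(-1)=0$) form a contradicting exchange.

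It then remains to check that these two templates cover every configuration with at least four negative cross edges, which is a short counting argument on the numbers $d(x_i)$ of negative edges joining $x_i$ to $\{u_1,u_2,u_3\}$. When $d(x_2)=0$ the endpoints carry at least four negative edges, so one of them has at least two, and the second template applies. When $d(x_2)\geq 1$ I would show the first template always applies: if $d(x_2)=1$ with unique negative neighbour $u_d$, then $d(x_1)+d(x_3)\geq 3$ cannot be realised using only edges at $u_d$ (at most two such), forcing an endpoint negative edge avoiding $u_d$; and if $d(x_2)\geq 2$, then some endpoint has a negative edge (otherwise the total is at most $3$) and a negative neighbour of $x_2$ distinct from that endpoint's $u$-partner can be chosen. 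The main obstacle is precisely this exhaustiveness combined with the insistence on label-sum exactly $-2$ — three negative edges, not two and not four — so that I must confirm that no configuration with four negative cross edges slips through; the symmetry on $\{u_1,u_2,u_3\}$ and on $\{x_1,x_3\}$ is what keeps the analysis down to the two cases above.
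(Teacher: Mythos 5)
Your proposal is correct and follows essentially the same approach as the paper: both proofs work by exhibiting contradicting edge-exchanges supported on the six vertices of $V(P)\cup V(S)$ (using the same symmetries), and your first template, producing the paths $u_au_bx_l$ and $u_dx_2x_m$, is exactly the paper's exchange $-\{u_1u_2,u_2u_3,x_1x_2\}+\{u_1x_1,u_1u_3,u_2x_2\}$ written in different notation. The only organizational difference is that the paper rules out two forbidden pairs of negative cross edges and concludes that all negative cross edges must share a common vertex (hence at most three exist), whereas you assume at least four negative cross edges and run a case analysis on how many are incident with the centre $x_2$, adding a second exchange template for the case of two negatives at one endpoint; both versions are complete.
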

\begin{proof}[Proof of Claim \ref{P3:claim1b}]
If $c(u_1x_1)=c(u_3x_3) = -1$, 
then the edge exchange $-\{u_1u_2,x_2x_3\}+\{u_1x_1,u_3x_3\}$ 
is contradicting. 
If $c(u_1x_1) =c(u_2x_2) = -1$, 
then the edge exchange $-\{u_1u_2,u_2u_3,x_1x_2\}+\{u_1x_1,u_1u_3,u_2x_2\}$ 
is contradicting. 
By symmetry, this implies the existence of a vertex $x$ on $S$
such that every edge $e$ with $c(e)=-1$ 
between $V(P)$ and $V(S)$
is incident with $x$,
which clearly implies the desired statement.
\end{proof}

\begin{claim}\label{P3:claim1c}
If $R:w_1w_2w_3$ and $T:y_1y_2y_3$ are two distinct paths of type $(1,-1)$ in $F$,
then $c(e)=1$ for at least five of the nine edges $e$ between $V(R)$ and $V(T)$.
\end{claim}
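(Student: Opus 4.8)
The plan is to show that at most four of the nine cross edges carry the label $-1$, which is exactly the asserted bound. Throughout I use the local structure already recorded: since $R$ and $T$ have type $(1,-1)$, the earlier observation gives $c(w_1w_3)=c(y_1y_3)=1$, so $w_1$ and $y_1$ are ``apices'' joined to the other two vertices of their path by $+1$-edges, while $c(w_2w_3)=c(y_2y_3)=-1$; moreover $w_2,w_3$ may be interchanged, $y_2,y_3$ may be interchanged, and $R,T$ may be interchanged. Write $p_{ij}=c(w_iy_j)$ for the nine cross labels. The basic mechanism is that a contradicting edge-exchange supported on $V(R)\cup V(T)$ amounts to repartitioning these six vertices into two new $P_3$'s whose labels sum to $-2$, because the two original paths contribute $0$, the whole factor contributes $2$, and any two $P_3$'s contribute an even number, so reaching a zero-sum factor means hitting exactly $-2$. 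I will only need to exhibit a few such exchanges.

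First I would pin down the four ``apex-incident'' cross labels $p_{12},p_{13},p_{21},p_{31}$. The exchange $-\{ w_1w_2,y_1y_2\}+\{ w_1y_2,w_2y_1\}$ produces the two paths $w_1y_2y_3$ and $w_3w_2y_1$ and changes the total by $c(w_1y_2)+c(w_2y_1)-2$; were $p_{12}\neq p_{21}$ this change would be $-2$ and the exchange contradicting, so $p_{12}=p_{21}$. Applying the symmetries $w_2\leftrightarrow w_3$ and $y_2\leftrightarrow y_3$ to this single exchange yields $p_{12}=p_{13}=p_{21}=p_{31}=:s$.

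I would then split on the common value $s$. If $s=-1$, these four edges already account for four labels $-1$, and I would show the remaining five labels are all $+1$. For a base-base label, say $p_{33}$, the exchange $-\{ w_2w_3,y_1y_2\}+\{ w_2y_1,w_3y_3\}$ gives the paths $w_1w_2y_1$ and $w_3y_3y_2$ and is contradicting precisely when $p_{21}=p_{33}=-1$; since $p_{21}=s=-1$, this forces $p_{33}=+1$, and the symmetric copies settle $p_{22},p_{23},p_{32}$. For the apex-apex label $p_{11}$ the simple two-edge swaps do not suffice, so I would use the three-edge exchange $-\{ w_1w_2,y_1y_2,y_2y_3\}+\{ w_1y_1,w_1y_2,w_2y_3\}$, which produces the paths $y_1w_1y_2$ and $w_3w_2y_3$ and is contradicting exactly when $c(w_1y_1)+c(w_1y_2)+c(w_2y_3)=-1$; with $p_{12}=s=-1$ and $p_{23}=+1$ already known, a value $p_{11}=-1$ would make this sum equal $-1$, a contradiction, so $p_{11}=+1$. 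Hence at most four labels $-1$ occur. If instead $s=+1$, the apex-incident edges contribute no $-1$, so five or more labels $-1$ would force all of $p_{11}$ and the four base-base labels to be $-1$; but then the same three-edge exchange has $c(w_1y_1)+c(w_1y_2)+c(w_2y_3)=-1+1-1=-1$ and is contradicting. In either case at most four cross edges are labeled $-1$, which is the claim.

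The step I expect to be the main obstacle is the treatment of the apex-apex edge $c(w_1y_1)$: it is incident to neither base edge, so it is invisible to the clean two-edge swaps that settle the other eight labels, and reaching it requires the less symmetric three-edge exchange above together with the prior knowledge that the relevant base-base label equals $+1$. Arranging the symmetry reductions so that this one extra exchange covers both the $s=-1$ and the $s=+1$ cases is the part that needs the most care.
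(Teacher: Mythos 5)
Your proposal is correct: each exchange you list really does repartition $V(R)\cup V(T)$ into two $P_3$'s, your accounting is right (the new factor's sum is $2$ plus the sum over the two new paths, so an exchange is contradicting exactly when those two paths sum to $-2$), and your symmetry appeals are legitimate, since replacing $w_1w_2w_3$ by $w_1w_3w_2$ preserves both the factor sum $2$ and the type $(1,-1)$ because $c(w_1w_3)=1$. The underlying engine is the same as the paper's — contradicting edge-exchanges among the six vertices combined with the $w_2\leftrightarrow w_3$ and $y_2\leftrightarrow y_3$ symmetries, and both arguments use the very same three-edge exchange $-\{w_1w_2,y_1y_2,y_2y_3\}+\{w_1y_1,w_1y_2,w_2y_3\}$ to reach the apex-apex edge — but your case decomposition is genuinely different. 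The paper splits directly on $c(w_1y_1)$: if $c(w_1y_1)=-1$, two exchanges force the four edges between $\{w_2,w_3\}$ and $\{y_2,y_3\}$ together with $w_1y_2$ to be positive; if $c(w_1y_1)=1$, it extracts one positive edge from each of the four disjoint pairs $\{w_1y_2,w_2y_3\}$, $\{w_1y_3,w_3y_2\}$, $\{w_2y_1,w_3y_3\}$, $\{w_3y_1,w_2y_2\}$. You instead begin with a structural step the paper does not have: the swap exchange $-\{w_1w_2,y_1y_2\}+\{w_1y_2,w_2y_1\}$, which the paper never uses in this claim, forces $p_{12}=p_{13}=p_{21}=p_{31}=s$, and you then split on $s$. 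Your route yields slightly more information — for $s=-1$ you determine all nine cross labels exactly (precisely five positive, so the bound of five is attained there), and for $s=+1$ the conclusion reduces to a one-line count — while the paper's route is marginally shorter and produces the disjoint-pairs structure that recurs in its other claims. One small point in your favor: the paper's closing phrase "together with $w_1w_2$" in its second case is evidently a typo for the edge $w_1y_1$, which is exactly the edge your three-edge exchange is designed to handle.
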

\begin{proof}[Proof of Claim \ref{P3:claim1c}]
First, we assume that $c(w_1y_1)=-1$.
If $c(w_3y_3) = -1$, 
then the edge exchange $-\{ w_2w_3,y_1y_2\}+\{ w_1y_1,w_3y_3\}$ 
is contradicting. 
If $c(w_1y_2) = -1$ and $c(w_3y_3) = 1$, 
then the edge exchange $-\{ w_1w_2,y_1y_2,y_2y_3\}+\{ w_1y_1,w_1y_2,w_3y_3\}$ 
is contradicting. 
Hence, by symmetry, we obtain that $c(e)=1$
for the four edges $e$ between $\{ w_2,w_3\}$ and $\{ y_2,y_3\}$
as well as for $e=w_1y_2$,
which yields the desired five edges between $V(R)$ and $V(T)$ in this case.

Next, we assume that $c(w_1y_1)=1$.
If $c(w_1y_2)=c(w_2y_3) = -1$, 
then the edge exchange $-\{ w_1w_2,y_1y_2,y_2y_3\}+\{ w_1y_1,w_1y_2,w_2y_3\}$ is contradicting. 
Hence, $c(e)=1$ for at least one edge $e$ in
 $\{ w_1y_2,w_2y_3\}$.
Symmetric arguments imply that 
$c(e)=1$ for at least one edge $e$ in 
each of the three sets 
$\{ w_1y_3,w_3y_2\}$,
$\{ w_2y_1,w_3y_3\}$, and
$\{ w_3y_1,w_2y_2\}$.
Again, together with $w_1w_2$,
this yields the desired five edges between $V(R)$ and $V(T)$ also in this case.
\end{proof}

\begin{claim}\label{P3:claim1}
There are two paths $P$ and $Q$ of type $(1,1)$ in $F$
such that $c(e)=-1$ for some edge $e$ of $K_n$ between $V(P)$ and $V(Q)$.
\end{claim}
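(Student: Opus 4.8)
The plan is to argue by contradiction. I assume the negation of Claim \ref{P3:claim1}, namely that between \emph{every} pair of paths of type $(1,1)$ in $F$ all nine connecting edges carry the label $1$, and I combine this with Claims \ref{P3:claim1a}, \ref{P3:claim1b}, and \ref{P3:claim1c} to show that the total $c(E(K_n))$ would be strictly positive, contradicting the assumption that $c$ is zero-sum.

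First I would fix bookkeeping. Let $a$, $b$, and $d$ denote the numbers of paths of $F$ of types $(1,1)$, $(1,-1)$, and $(-1,-1)$, respectively. Since these path types contribute $2$, $0$, and $-2$ to $c(E(F))$, the normalization $c(E(F))=2$ yields the crucial identity $a-d=1$. I then split $c(E(K_n))$ into the contribution of the edges inside the paths of $F$ and the contribution of the edges running between two distinct paths, and I bound each contribution from below.

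For the within-path contribution I use that the third edge of a path of type $(1,1)$ or $(1,-1)$ is labeled $1$ (as established before Claim \ref{P3:claim1a}), so such paths contribute $3$ and $1$, respectively, while a path of type $(-1,-1)$ contributes at least $-3$; hence the within-path contribution is at least $3a+b-3d$, which equals $b+3$ by $a-d=1$. For the between-path contribution I record, for each unordered pair of paths, the label excess of the nine connecting edges (the number of $1$'s minus the number of $-1$'s). By the contradiction hypothesis a $(1,1)$--$(1,1)$ pair has excess $9$; by Claim \ref{P3:claim1a} a $(1,1)$--$(1,-1)$ pair has excess $9$; by Claim \ref{P3:claim1b} a $(1,1)$--$(-1,-1)$ pair has excess at least $3$; and by Claim \ref{P3:claim1c} a $(1,-1)$--$(1,-1)$ pair has excess at least $1$. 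For the remaining pairs, which involve two $(-1,-1)$-paths or a $(1,-1)$-path and a $(-1,-1)$-path, I only use the trivial bound $-9$.

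The heart of the argument, and the step I expect to be the main obstacle, is to check that these uncontrolled negative terms are dominated, which is precisely where $a-d=1$ enters. Summing the per-pair bounds shows that the between-path contribution is at least
$$9\binom{a}{2}+9ab+3ad+\binom{b}{2}-9bd-9\binom{d}{2}.$$
Substituting $a=d+1$ collapses $9\binom{a}{2}-9\binom{d}{2}$ to $9d$ and $9ab-9bd$ to $9b$, turning this into the manifestly nonnegative quantity $3d^2+12d+9b+\binom{b}{2}$. Adding the within-path contribution of at least $b+3$ then gives $c(E(K_n))\ge 3>0$, the desired contradiction. I note that this simultaneously rules out $a\le 1$ (where the contradiction hypothesis is vacuous), so the argument really does furnish two distinct paths of type $(1,1)$ as claimed.
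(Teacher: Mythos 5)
Your proof is correct and takes essentially the same route as the paper's: the paper likewise sums the per-pair excesses from Claims \ref{P3:claim1a}, \ref{P3:claim1b}, and \ref{P3:claim1c} together with the trivial $-9$ bound for the uncontrolled pairs and the within-path triangle contributions, then uses $a=c+1$ (your $a=d+1$) to conclude $c(E(K_n))>0$. Your write-up is marginally more explicit in carrying out the algebraic collapse and in noting that the case of fewer than two $(1,1)$-paths is also refuted, points the paper leaves implicit.
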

\begin{proof}[Proof of Claim \ref{P3:claim1}]
Let 
\begin{itemize}
\item $a$ be the number of paths of type $(1,1)$ in $F$,
\item $b$ be the number of paths of type $(1,-1)$ in $F$, and
\item $c$ be the number of paths of type $(-1,-1)$ in $F$.
\end{itemize}
Suppose, for a contradiction, 
that two paths $P$ and $Q$ of type $(1,1)$ in $F$
together with an edge $e$ with $c(e)=-1$ between $V(P)$ and $V(Q)$ do not exist.
This implies that the nine edges between every two paths of type $(1,1)$ in $F$
contribute $9$ to $c(E(K_n))$.
By Claim \ref{P3:claim1a},
the nine edges between a path of type $(1,1)$ in $F$
and a path of type $(1,-1)$ in $F$ 
also contribute $9$ to $c(E(K_n))$.
By Claim \ref{P3:claim1b},
the nine edges between a path of type $(1,1)$ in $F$
and a path of type $(-1,-1)$ in $F$ 
contribute at least $6-3$ to $c(E(K_n))$.
By Claim \ref{P3:claim1c},
the nine edges between every two paths of type $(1,-1)$ in $F$
contribute at least $5-4$ to $c(E(K_n))$.

Since $c(E(F))=2$, we have $a=c+1$, and, hence,
\begin{align*}
    c(E(K_n)) &\geq 9\binom{a}{2} + 9ab  + 3a + (5-4)\binom{b}{2} +(2-1)b + (6-3)ac\\
    &- 9\binom{c}{2} - 9bc -3c\\
    & > 0,
\end{align*}
which is a contradiction.
\end{proof}
By Claim \ref{P3:claim1} and symmetry,
we may assume that $P:u_1u_2u_3$ and $Q:v_1v_2v_3$ are two paths 
of type $(1,1)$ in $F$ such that $c(u_1v_1)=-1$.
Since $F$ contains the two paths $P$ and $Q$ of type $(1,1)$,
it also contains at least one path $S:x_1x_2x_3$ of type $(-1,-1)$.
If $c(u_3v_3) =1$, 
then the edge exchange $-\{u_1u_2,v_2v_3\}+\{ u_1v_1,u_3v_3\}$ 
is contradicting. 
By symmetry, this implies $c(e)=-1$ for every edge $e$ 
between $V(P)$ and $V(Q)$.

The $P_3$-factor $F'=\big(V(K_n),(E(F)\setminus \{u_1u_2,v_2v_3\})\cup \{ u_1v_1,u_3v_3\}\big)$
satisfies $c(F')=-2$.

If $R:w_1w_2w_3$ is a path of type $(1,-1)$ in $F$,
and hence, by construction, also in $F'$,
then exchanging the roles of $1$ and $-1$, 
and arguing similarly as before Claim \ref{P3:claim1a},
we obtain the contradiction $c(w_1w_3)=-1$.
Hence, $F$ contains no path of type $(1,-1)$.

If $F$ contains a path $P'$ of type $(1,1)$ 
that is distinct from $P$ and $Q$,
then, by Claim \ref{P3:claim1b},
$c(e)=1$ for at least six of the nine edges $e$ 
between $V(P')$ and $V(S)$.
Now, considering $F'$, 
and repeating the symmetric argument as in the proof of Claim \ref{P3:claim1b}
exchanging the roles of $1$ and $-1$,
implies that $c(e)=-1$ for at least six of the nine edges $e$ 
between $V(P')$ and $V(S)$.
This contradiction implies that such a path $P'$ does not exist,
that is, $F$ consists of the two paths $P$ and $Q$ of type $(1,1)$
and the one path $S$ of type $(-1,-1)$.
Consequently, $F'$ consists of 
one path of type $(-1,-1)$
and two paths of type $(1,-1)$.
Now, 
arguing symmetrically as in Claim \ref{P3:claim1},
exchanging the roles of $1$ and $-1$,
we obtain the contradiction 
that $F'$ contains two paths of type $(-1,-1)$
together with some edge $e$ with $c(e)=1$ between them.
This contradiction completes the proof.
\end{proof}
For the proof of Theorem \ref{theorem2}, 
we need the following technical lemma.

\begin{lemma}\label{lemma1}
If $(x_1,x_2,x_3,x_4,x_5)\in [0,1]^5$ is such that 
$$x_1+x_2+x_3+x_4+x_5=1\,\,\,\,\,\,\mbox{ and }\,\,\,\,\,\,3x_1+x_2-x_3-x_4-3x_5 \geq 0,$$
then $f(x_1,x_2,x_3,x_4,x_5)\geq \frac{5}{256}$, where
$$f(x_1,x_2,x_3,x_4,x_5)=\frac{x_1^2}{2}+\frac{x_2^2}{2}+\frac{x_3^2}{2}-\frac{3x_4^2}{16}-\frac{x_5^2}{2}
+x_1x_2+x_1x_3+x_2x_3+\frac{x_1x_4}{2}+\frac{x_1x_5}{2}-\frac{x_4x_5}{2}.$$
\end{lemma}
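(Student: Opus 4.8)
The plan is to read Lemma~\ref{lemma1} as the minimization of the (indefinite) quadratic function $f$ over the compact polytope cut out by the simplex $x_1+\dots+x_5=1$, $x_i\ge 0$, together with the single half-space $3x_1+x_2-x_3-x_4-3x_5\ge 0$, and to eliminate variables before doing any calculus. The first step is purely algebraic: the degree-two part of $f$ in $x_1,x_2,x_3$ is a perfect square, so that
$$f=\tfrac12(x_1+x_2+x_3)^2-\tfrac{3}{16}x_4^2-\tfrac12 x_5^2+\tfrac12 x_1(x_4+x_5)-\tfrac12 x_4x_5.$$
Substituting $x_1+x_2+x_3=1-x_4-x_5$ from the equality constraint, the variables $x_2$ and $x_3$ vanish from $f$ altogether, and $x_1$ survives only through the single term $\tfrac12 x_1(x_4+x_5)$.

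The key observation is then that, for fixed $(x_4,x_5)$, the coefficient $\tfrac12(x_4+x_5)$ is nonnegative, so $f$ is nondecreasing in $x_1$ and I would push $x_1$ to its smallest feasible value. The only obstruction to $x_1=0$ is the inequality $3x_1+x_2-x_3\ge x_4+3x_5$; making it as weak as possible by choosing $x_2=1-x_1-x_4-x_5$ and $x_3=0$ shows that the least admissible value of $x_1$ equals $\max\{0,\,x_4+2x_5-\tfrac12\}$. This reduces the whole problem to minimizing an explicit two-variable quadratic over a polygon, split into the two cases $x_4+2x_5\le\tfrac12$ and $x_4+2x_5>\tfrac12$.

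In the first case $x_1=0$, and the reduced function becomes $g(x_4,x_5)=\tfrac12-x_4-x_5+\tfrac{5}{16}x_4^2+\tfrac12 x_4x_5$, which is strictly decreasing in $x_5$; pushing $x_5$ onto the edge $x_4+2x_5=\tfrac12$ and then minimizing in $x_4$ yields the value $\tfrac{5}{64}$, comfortably above $\tfrac{5}{256}$. In the second case $x_1=x_4+2x_5-\tfrac12$, and substitution gives $f=\tfrac12-\tfrac54 x_4-\tfrac54 x_5+\tfrac{13}{16}x_4^2+2x_4x_5+x_5^2$.

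The main obstacle lies here: this quadratic is not convex, since its unique stationary point $(x_4,x_5)=(0,\tfrac58)$ is a saddle (the Hessian has determinant $-\tfrac34$), so the minimum cannot be read off from the interior and must instead be located on the boundary edges of the polygon, namely $x_4=0$, $x_5=0$, $x_4+2x_5=\tfrac12$, and $2x_4+3x_5=\tfrac32$ (the last encoding $x_2=x_3=0$). I would dispatch each edge by a one-variable computation; all four return values at least $\tfrac{5}{256}$, with equality occurring exactly at $(x_4,x_5)=(\tfrac34,0)$. A direct check then confirms $f(\tfrac14,0,0,\tfrac34,0)=\tfrac{5}{256}$, which both finishes the proof and exhibits the extremal point (where the linear constraint is tight), showing the constant $\tfrac{5}{256}$ is best possible.
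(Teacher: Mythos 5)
Your proof is correct, and it takes a genuinely different route from the paper's. The paper argues variationally in the original five variables: by compactness it fixes a minimizer and then exhibits explicit feasible perturbations that strictly decrease $f$, forcing in turn $x_3=0$, tightness of the linear constraint ($3x_1+x_2=x_4+3x_5$), $x_2=0$, and $x_5=0$, which leaves $x_1=\frac14$, $x_4=\frac34$ and the value $\frac{5}{256}$. You instead eliminate variables before doing any calculus: the observation that the quadratic part of $f$ in $x_1,x_2,x_3$ is the perfect square $\frac12(x_1+x_2+x_3)^2$ --- which the paper never exploits --- lets you substitute $x_1+x_2+x_3=1-x_4-x_5$, after which $x_1$ survives only in the monotone term $\frac12 x_1(x_4+x_5)$; pinning $x_1$ at its least feasible value $\max\{0,\,x_4+2x_5-\frac12\}$ reduces the problem to a two-variable quadratic over a polygon, and since in the nontrivial case its (constant) Hessian has determinant $-\frac34$, the minimum must lie on the four edges. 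I verified your asserted edge computations: the edges $x_4=0$ and $x_4+2x_5=\frac12$ give minima $\frac18$ and $\frac{5}{64}$; the edge $x_5=0$ gives $\frac12-\frac54 x_4+\frac{13}{16}x_4^2$ on $[\frac12,\frac34]$, which is still decreasing there (its vertex $\frac{10}{13}$ lies beyond $\frac34$) and equals $\frac{5}{256}$ at $x_4=\frac34$; and the edge $2x_4+3x_5=\frac32$ gives $\frac{5}{256}+\frac{x_5}{64}(19-11x_5)\geq\frac{5}{256}$, with equality only at $x_5=0$. So all of your claims, including uniqueness of the extremal point $(\frac14,0,0,\frac34,0)$, are correct. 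The trade-off: your approach is mechanical (no perturbation directions need to be guessed) and yields more structural information, such as the case-1 minimum $\frac{5}{64}$ and the exact location of equality, while the paper's approach avoids the case analysis over boundary edges at the cost of finding the right sequence of decreasing perturbations; both ultimately rest on compactness of the feasible region.
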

\begin{proof}
Since $f$ is smooth and the constraints describe a compact subset of $\mathbb{R}^5$,
we may assume that $(x_1,x_2,x_3,x_4,x_5)$ minimizes the value of $f$ subject to the constraints.

If $x_3>0$, then, for some sufficiently small $\epsilon>0$, 
the vector $(x_1,x_2,x_3-\epsilon,x_4+\epsilon,x_5)$ satisfies the constraints, and
\begin{eqnarray*}
f(x_1,x_2,x_3-\epsilon,x_4+\epsilon,x_5)-f(x_1,x_2,x_3,x_4,x_5)
&=& -\left(\frac{x_1}{2}+x_2+x_3+\frac{3x_4}{8}+\frac{x_5}{2}\right)\epsilon+\frac{5}{16}\epsilon^2\\
& \leq & -\frac{3}{8}\epsilon+\frac{5}{16}\epsilon^2 < 0 
\end{eqnarray*}
yields a contradiction to the choice of $(x_1,x_2,x_3,x_4,x_5)$.
Hence, we have $$x_3=0.$$
If $3x_1+x_2-x_4-3x_5>0$, then either $x_1>0$ or $x_2>0$.
In the first case, 
for some sufficiently small $\epsilon>0$, 
the vector $(x_1-\epsilon,x_2,0,x_4,x_5+\epsilon)$ satisfies the constraints, and
\begin{eqnarray*}
f(x_1-\epsilon,x_2,0,x_4,x_5+\epsilon)-f(x_1,x_2,0,x_4,x_5)
&=& -\left(\frac{x_1}{2}+x_2+x_4+\frac{3x_5}{2}\right)\epsilon-\frac{1}{2}\epsilon^2< 0 
\end{eqnarray*}
yields a contradiction to the choice of $(x_1,x_2,x_3,x_4,x_5)$.
In the second case, 
for some sufficiently small $\epsilon>0$, 
the vector $(x_1,x_2-\epsilon,0,x_4,x_5+\epsilon)$ satisfies the constraints, and
\begin{eqnarray*}
f(x_1,x_2-\epsilon,0,x_4,x_5+\epsilon)-f(x_1,x_2,0,x_4,x_5)
&=& -\left(\frac{x_1}{2}+x_2+\frac{x_4}{2}+x_5\right)\epsilon< 0 
\end{eqnarray*}
yields a contradiction to the choice of $(x_1,x_2,x_3,x_4,x_5)$.
Hence, we have $$3x_1+x_2=x_4+3x_5.$$
Note that 
\begin{eqnarray}
4x_1+2x_2&=&(x_1+x_2)+(3x_1+x_2)=x_1+x_2+x_4+3x_5\geq 1,\mbox{ which implies}\nonumber \\
\frac{x_1}{2}+x_2&\geq &\frac{1}{8}(4x_1+2x_2)\geq \frac{1}{8},\nonumber \\
x_1+x_2&\geq &\frac{1}{4}(4x_1+2x_2)\geq \frac{1}{4},\nonumber \\
x_4&\leq &1-(x_1+x_2)\leq \frac{3}{4},\mbox{ and, hence,}\nonumber \\
\frac{x_1}{2}+x_2-\frac{x_4}{8}&\geq &\frac{1}{32}.\label{e32}
\end{eqnarray}
If $x_2>0$, then, for some sufficiently small $\epsilon>0$, 
the vector $(x_1+\epsilon,x_2-2\epsilon,0,x_4+\epsilon,x_5)$ satisfies the constraints, and
\begin{eqnarray*}
f(x_1+\epsilon,x_2-2\epsilon,0,x_4+\epsilon,x_5)-f(x_1,x_2,0,x_4,x_5)
&=& -\left(\frac{x_1}{2}+x_2-\frac{x_4}{8}\right)\epsilon+\frac{13}{16}\epsilon^2\\
&\stackrel{(\ref{e32})}{\leq} & -\frac{1}{32}\epsilon+\frac{13}{16}\epsilon^2< 0
\end{eqnarray*}
yields a contradiction to the choice of $(x_1,x_2,x_3,x_4,x_5)$.
Hence, we have $$x_2=0.$$
Note that necessarily $x_1>0$.

If $x_5>0$, then, for some sufficiently small $\epsilon>0$, 
the vector $\left(x_1-\frac{\epsilon}{2},0,0,x_4+\frac{3\epsilon}{2},x_5-\epsilon\right)$ satisfies the constraints, and
\begin{eqnarray*}
f\left(x_1-\frac{\epsilon}{2},0,0,x_4+\frac{3\epsilon}{2},x_5-\epsilon\right)-f(x_1,0,0,x_4,x_5)
&=& -\left(\frac{x_1}{4}+\frac{5x_4}{16}\right)\epsilon-\frac{11}{64}\epsilon^2 < 0
\end{eqnarray*}
yields a contradiction to the choice of $(x_1,x_2,x_3,x_4,x_5)$.
Hence, we have $$x_5=0.$$
Now, $x_1+x_4=1$ and $3x_1=x_4$ imply that $x_1=\frac{1}{4}$ and $x_4=\frac{3}{4}$.
Since $f\left(\frac{1}{4},0,0,\frac{3}{4},0\right)=\frac{5}{256}$,
the desired result follows.
\end{proof}

\setcounter{claim}{0}

\begin{proof}[Proof of Theorem \ref{theorem2}]
Similarly as the proof of Theorem \ref{theorem1}, the current proof is by contradiction,
that is, we assume that $8$ divides $n$
and that $c$ is a zero-sum $\pm 1$-labeling of the edges of $K_n$ 
but that $K_n$ has no zero-sum $P_4$-factor.
In order to obtain some of the contradictions, 
we shall assume that $n$ is sufficiently large.
Exactly as in the proof of Theorem \ref{theorem1},
we obtain the existence of a $P_4$-factor $F$ of $K_n$ with $c(E(F))=2$.
For a path $P:u_1u_2u_3u_4$ in $K_n$, the {\it type} of $P$ is $(c(u_1u_2),c(u_2u_3),c(u_3u_4))$.
Up to symmetry, there are exactly the six following types:
$$(1,1,1),\,\,\,\,
(1,-1,1),\,\,\,\,
(1,1,-1),\,\,\,\,
(-1,1,-1),\,\,\,\,
(1,-1,-1),\,\,
(-1,-1,-1).$$
If $E_{\rm out}$ and $E_{\rm in}$ are two sets of edges of $K_n$ such that 
$(V(K_n),(E(F)\setminus E_{\rm out})\cup E_{\rm in})$
is a $P_4$-factor $F'$ of $K_n$ with $c(F')=0$,
then we say that the {\it edge-exchange $-E_{\rm out}+E_{\rm in}$} is {\it contradicting}.
Clearly, the existence of a contradicting edge-exchange yields a contradiction.

If $P:u_1u_2u_3u_4$ is a path in $F$,
$e$ is an edge of $P$ with $c(e)=1$, and 
$c(u_1u_4)=-1$, 
then the edge-exchange $-\{ e\}+\{ u_1u_4\}$ is contradicting.
This implies that 
\begin{eqnarray}\label{e1}
\mbox{\it $c(u_1u_4)=1$ for every path $u_1u_2u_3u_4$ in $F$ whose type is not $(-1,-1,-1)$.}
\end{eqnarray}
If $P:u_1u_2u_3u_4$ is a path in $F$ of type $(1,-1,1)$, 
then, by (\ref{e1}), the path $P':u_4u_1u_2u_3$ is of type $(1,1,-1)$.
Replacing $P$ within $F$ by $P'$ yields a $P_4$-factor $F'$ with $c(E(F'))=2$
and less paths of type $(1,-1,1)$.
This implies that we may assume that 
$$\mbox{\it no path in $F$ has type $(1,-1,1)$.}$$
If $P:u_1u_2u_3u_4$ is a path in $F$ whose type is $(1,1,1)$, $(1,1,-1)$, or $(-1,1,-1)$,
and $c(e)=-1$ for some edge $e$ in $\{ u_1u_3,u_2u_4\}$, 
then the edge-exchange $-\{ u_2u_3\}+\{ e\}$ is contradicting,
which implies that
\begin{eqnarray}\label{e2}
\mbox{\it $c(u_1u_3)=c(u_2u_4)=1$ for every path $u_1u_2u_3u_4$ in $F$ of type $(1,1,1)$, $(1,1,-1)$, or $(-1,1,-1)$.}
\end{eqnarray}
The observations (\ref{e1}) and (\ref{e2}) imply a number of very useful symmetries,
which we will exploit in order to reduce the number of cases.
If, for instance, $P$ is a path in $F$ of type $(1,1,1)$, 
then (\ref{e1}) and (\ref{e2}) imply that every $P_4$ in $K_n$ with vertex set $V(P)$ is of type $(1,1,1)$,
which implies complete symmetry between the vertices of $P$.
Similarly, if $P:u_1u_2u_3u_4$ is a path in $F$ of type $(1,1,-1)$, 
then (\ref{e1}) implies that $u_2u_1u_4u_3$ is of type $(1,1,-1)$,
which implies a symmetry between the vertices $u_1$ and $u_2$
as well as between the vertices $u_3$ and $u_4$.
If $P:u_1u_2u_3u_4$ is a path in $F$ of type $(1,-1,-1)$, 
then (\ref{e1}) implies that $u_1u_4u_3u_2$ is of type $(1,-1,-1)$,
which implies a symmetry between the vertices $u_2$ and $u_4$.
Finally, if $P:u_1u_2u_3u_4$ is a path in $F$ of type $(-1,1,-1)$, 
then (\ref{e1}) and (\ref{e2}) imply that 
$u_2u_1u_4u_3$,
$u_2u_1u_3u_4$, and
$u_1u_2u_4u_3$ are of type $(-1,1,-1)$,
which implies complete symmetry between the vertices of $P$.

Our overall goal is to derive the contradiction $c(E(K_n))>0$.
In order to achieve this, we investigate the edges between different paths in $F$.

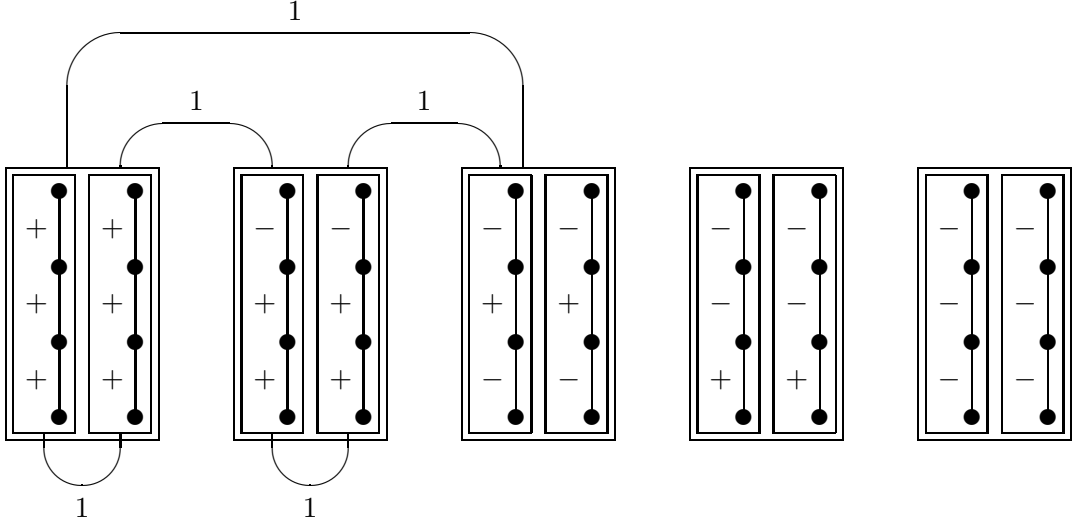
\begin{figure}[h]
\begin{center}
\unitlength 1mm 
\linethickness{0.4pt}
\ifx\plotpoint\undefined\newsavebox{\plotpoint}\fi 
\begin{picture}(145,71)(0,0)
\put(12,17){\circle*{2}}
\put(42,17){\circle*{2}}
\put(72,17){\circle*{2}}
\put(102,17){\circle*{2}}
\put(132,17){\circle*{2}}
\put(22,17){\circle*{2}}
\put(52,17){\circle*{2}}
\put(82,17){\circle*{2}}
\put(112,17){\circle*{2}}
\put(142,17){\circle*{2}}
\put(12,27){\circle*{2}}
\put(42,27){\circle*{2}}
\put(72,27){\circle*{2}}
\put(102,27){\circle*{2}}
\put(132,27){\circle*{2}}
\put(22,27){\circle*{2}}
\put(52,27){\circle*{2}}
\put(82,27){\circle*{2}}
\put(112,27){\circle*{2}}
\put(142,27){\circle*{2}}
\put(12,37){\circle*{2}}
\put(42,37){\circle*{2}}
\put(72,37){\circle*{2}}
\put(102,37){\circle*{2}}
\put(132,37){\circle*{2}}
\put(22,37){\circle*{2}}
\put(52,37){\circle*{2}}
\put(82,37){\circle*{2}}
\put(112,37){\circle*{2}}
\put(142,37){\circle*{2}}
\put(12,47){\circle*{2}}
\put(42,47){\circle*{2}}
\put(72,47){\circle*{2}}
\put(102,47){\circle*{2}}
\put(132,47){\circle*{2}}
\put(22,47){\circle*{2}}
\put(52,47){\circle*{2}}
\put(82,47){\circle*{2}}
\put(112,47){\circle*{2}}
\put(142,47){\circle*{2}}
\put(12,47){\line(0,-1){30}}
\put(42,47){\line(0,-1){30}}
\put(72,47){\line(0,-1){30}}
\put(102,47){\line(0,-1){30}}
\put(132,47){\line(0,-1){30}}
\put(22,47){\line(0,-1){30}}
\put(52,47){\line(0,-1){30}}
\put(82,47){\line(0,-1){30}}
\put(112,47){\line(0,-1){30}}
\put(142,47){\line(0,-1){30}}
\put(9,22){\makebox(0,0)[cc]{$+$}}
\put(39,22){\makebox(0,0)[cc]{$+$}}
\put(69,22){\makebox(0,0)[cc]{$-$}}
\put(99,22){\makebox(0,0)[cc]{$+$}}
\put(129,22){\makebox(0,0)[cc]{$-$}}
\put(19,22){\makebox(0,0)[cc]{$+$}}
\put(49,22){\makebox(0,0)[cc]{$+$}}
\put(79,22){\makebox(0,0)[cc]{$-$}}
\put(109,22){\makebox(0,0)[cc]{$+$}}
\put(139,22){\makebox(0,0)[cc]{$-$}}
\put(9,32){\makebox(0,0)[cc]{$+$}}
\put(39,32){\makebox(0,0)[cc]{$+$}}
\put(69,32){\makebox(0,0)[cc]{$+$}}
\put(99,32){\makebox(0,0)[cc]{$-$}}
\put(129,32){\makebox(0,0)[cc]{$-$}}
\put(19,32){\makebox(0,0)[cc]{$+$}}
\put(49,32){\makebox(0,0)[cc]{$+$}}
\put(79,32){\makebox(0,0)[cc]{$+$}}
\put(109,32){\makebox(0,0)[cc]{$-$}}
\put(139,32){\makebox(0,0)[cc]{$-$}}
\put(9,42){\makebox(0,0)[cc]{$+$}}
\put(39,42){\makebox(0,0)[cc]{$-$}}
\put(69,42){\makebox(0,0)[cc]{$-$}}
\put(99,42){\makebox(0,0)[cc]{$-$}}
\put(129,42){\makebox(0,0)[cc]{$-$}}
\put(19,42){\makebox(0,0)[cc]{$+$}}
\put(49,42){\makebox(0,0)[cc]{$-$}}
\put(79,42){\makebox(0,0)[cc]{$-$}}
\put(109,42){\makebox(0,0)[cc]{$-$}}
\put(139,42){\makebox(0,0)[cc]{$-$}}
\put(6,15){\framebox(8,34)[cc]{}}
\put(36,15){\framebox(8,34)[cc]{}}
\put(66,15){\framebox(8,34)[cc]{}}
\put(96,15){\framebox(8,34)[cc]{}}
\put(126,15){\framebox(8,34)[cc]{}}
\put(16,15){\framebox(8,34)[cc]{}}
\put(46,15){\framebox(8,34)[cc]{}}
\put(76,15){\framebox(8,34)[cc]{}}
\put(106,15){\framebox(8,34)[cc]{}}
\put(136,15){\framebox(8,34)[cc]{}}
\put(5,14){\framebox(20,36)[cc]{}}
\put(35,14){\framebox(20,36)[cc]{}}
\put(65,14){\framebox(20,36)[cc]{}}
\put(95,14){\framebox(20,36)[cc]{}}
\put(125,14){\framebox(20,36)[cc]{}}
\put(45,5){\makebox(0,0)[cc]{$1$}}
\put(15,5){\makebox(0,0)[cc]{$1$}}
\put(30,59){\makebox(0,0)[cc]{$1$}}
\put(43,71){\makebox(0,0)[cc]{$1$}}
\put(60,59){\makebox(0,0)[cc]{$1$}}
\put(45,15){\oval(10,14)[b]}
\put(15,15){\oval(10,14)[b]}
\put(30,50){\oval(20,12)[t]}
\put(60,50){\oval(20,12)[t]}
\put(43,50){\oval(60,36)[t]}
\end{picture}
\end{center}
\caption{Illustration of the statements in Claim \ref{claim1}.
The labels on the edges correspond to 
a lower bound on the fraction of all represented edges 
that have label $1$.}\label{fig1}
\end{figure}

\begin{claim}\label{claim1}
Let $P:u_1u_2u_3u_4$ and $Q:v_1v_2v_3v_4$ be two distinct paths in $F$ 
of type $t_P$ and $t_Q$, respectively.
If $(t_P,t_Q)$ is one of the following type pairs, then 
$c(e)=1$ for each of the $16$ edges $e$ of $K_n$ between $V(P)$ and $V(Q)$:
$((1,1,1),(1,1,-1))$,
$((1,1,1),(-1,1,-1))$,
$((1,1,-1),(-1,1,-1))$,
$((1,1,-1),(1,1,-1))$, and
$((1,1,1),(1,1,1))$.

See Figure \ref{fig1} for an illustration.
\end{claim}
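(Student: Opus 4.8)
The plan is to assume, for contradiction, that some edge between $V(P)$ and $V(Q)$ carries the label $-1$, and to produce a \emph{contradicting edge-exchange}, i.e.\ a way to repartition the vertices involved into $P_4$'s whose labels sum to $c(E(F))-2=0$. The first step is to record the internal structure of $P$ and $Q$ forced by (\ref{e1}) and (\ref{e2}): every type occurring in the five listed pairs lies in $\{(1,1,1),(1,1,-1),(-1,1,-1)\}$, so within each of $P$ and $Q$ the three chords $u_1u_3$, $u_2u_4$, $u_1u_4$ all carry label $1$, and the only negative internal edges are the consecutive ones prescribed by the type. This is exactly the material needed to reroute the paths using only edges of known sign.

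The workhorse is a simple piece of bookkeeping. A repartition of the eight vertices of $P$ and $Q$ into two new $P_4$'s uses only within-$P$ and within-$Q$ edges (all of known sign) together with the cross edges it introduces; replacing a positive internal edge by a negative cross edge lowers the total by $2$, while dropping a negative internal edge raises it by $2$. Two such $P_4$'s use either no cross edges or at least two, since a single cross edge cannot link the two sides into two full paths. So I want to assemble two new $P_4$'s using just the right number of negative cross edges, balanced against the negative internal edges the types provide, to make the total drop by precisely $2$.

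For the four pairs other than $((1,1,1),(1,1,1))$, at least one of $P,Q$ has a negative internal edge and the target $c(E(P))+c(E(Q))-2$ is at most $2$. Assuming a cross edge is $-1$, I would normalise its position using the vertex-symmetries noted after (\ref{e2}) and then inspect one disjoint auxiliary cross edge: depending on its sign I either keep or drop the type's negative internal edge. For $((1,1,1),(1,1,-1))$ with $c(u_1v_1)=-1$, say, if $c(u_2v_2)=-1$ the paths $u_3u_1v_1v_3$ and $u_4u_2v_2v_4$ have total $2$, while if $c(u_2v_2)=1$ the paths $u_1v_1v_3v_4$ and $u_3u_4u_2v_2$ again have total $2$; both meet the target and are built only from the guaranteed positive chords, the offending negative cross edge, and the type's negative internal edge. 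A short such case check forces every cross edge to be $1$ for these four pairs.

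The genuinely hard case is $((1,1,1),(1,1,1))$, where all internal edges are positive and the target is $4$. Here a clean dichotomy comes first: if some cross edge is $-1$ and some \emph{disjoint} cross edge is $+1$, routing one through each of two new paths hits the target, a contradiction; a short argument then shows that the only alternative is that \emph{all} $16$ cross edges are $-1$. This configuration is the main obstacle, because a two-path repartition would have to use exactly one negative edge, which the parity remark above forbids: with all internal edges positive, such a repartition changes the total by an even multiple of $-2$, never by $-2$. To break it I would leave $P$ and $Q$ and bring in a third path $R$: since $c(E(F))=2$ is not a multiple of $3$, while any $P_4$-factor built entirely from monochromatic paths has label sum divisible by $3$, some path $R$ has both a positive and a negative internal edge. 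Rerouting $R$ among its own edges of known sign raises $c(E(R))$ by exactly $2$, and rerouting $P\cup Q$ with two negative cross edges and the positive chords lowers the sum by $4$; the net change $-2$ yields a zero-sum $P_4$-factor, the desired contradiction.
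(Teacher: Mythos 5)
Your overall strategy coincides with the paper's: normalise the position of a hypothetical negative cross edge via the symmetries from (\ref{e1}) and (\ref{e2}), exhibit a contradicting edge-exchange built only from edges of known sign, and, for the pair $((1,1,1),(1,1,1))$, establish the dichotomy (either some disjoint positive/negative pair of cross edges exists, or all sixteen cross edges are negative) and then break the all-negative configuration by importing a third, non-monochromatic path $R$, which exists since $c(E(F))=2$ is not divisible by $3$. Your treatment of $((1,1,1),(1,1,1))$, your parity remark about cross edges, and your worked example for $((1,1,1),(1,1,-1))$ are all correct and match the paper.

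The gap is the sentence ``a short such case check forces every cross edge to be $1$ for these four pairs.'' The template extracted from your worked example --- normalise the negative cross edge, inspect one \emph{parallel} auxiliary cross edge, keep or drop the type's negative internal edge --- genuinely fails for two of the remaining pairs, not merely for lack of detail. Concretely, for $(t_P,t_Q)=((1,1,-1),(1,1,-1))$, normalise the negative cross edge to $u_1v_1$ and take the auxiliary $u_2v_2$ as in your example: in the branch $c(u_2v_2)=-1$, the edges of known sign are the twelve internal ones plus these two cross edges, and a short enumeration shows that every repartition of $V(P)\cup V(Q)$ into two paths using only such edges has label sum in $\{-2,2,4,6\}$, never the required $0$; so from $c(u_1v_1)=-1$ you can only deduce $c(u_2v_2)=-1$, and the argument stalls. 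Similarly, for $((1,1,1),(-1,1,-1))$ the branch $c(u_2v_2)=+1$ dead-ends: reaching $0$ would force all three known negative edges $u_1v_1$, $v_1v_2$, $v_3v_4$ into the two new paths, which puts $v_1,v_2$ in the path containing $u_1v_1$ and leaves the path containing $v_3v_4$ unable to reach any $u$-vertex without an edge of unknown sign. Both pairs can be rescued inside your framework, but only with pair-specific choices: for the first, take the auxiliary at the \emph{opposite} end, $u_4v_4$ (then $u_2u_1v_1v_3$ together with $u_3u_4v_4v_2$ works when $c(u_4v_4)=-1$, and $u_2u_1v_1v_2$ together with $u_3u_4v_4v_3$ when $c(u_4v_4)=+1$); for the second, take the diagonal auxiliary $u_3v_3$; and the mixed positions afterwards need either previously settled positive cross edges or four-edge exchanges such as the paper's $-\{u_1u_2,u_2u_3,u_3u_4,v_2v_3\}+\{u_3v_2,u_2v_3,u_1u_3,u_2u_4\}$. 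This careful selection of auxiliaries and of the order in which positions are settled is precisely the content of the paper's proof of Claim \ref{claim1}; as written, your proposal would stall at the branches above.
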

\begin{proof}[Proof of Claim \ref{claim1}]
First, let $(t_P,t_Q)=((1,1,1),(1,1,-1))$.

If $c(e)=-1$ and $c(e')=1$ for 
some edge $e$ between $\{ u_3,u_4\}$ and $\{ v_3,v_4\}$ and
some edge $e'$ between $\{ u_1,u_2\}$ and $\{ v_1,v_2\}$, 
then the edge exchange $-\{ u_2u_3,v_2v_3\}+\{ e,e'\}$ is contradicting.
If $c(e)=-1$ and $c(e')=-1$ for 
\begin{itemize}
\item either 
some edge $e$ between $\{ u_3,u_4\}$ and $\{ v_3\}$ and
some edge $e'$ between $\{ u_1,u_2\}$ and $\{ v_1\}$
\item or
some edge $e$ between $\{ u_3,u_4\}$ and $\{ v_4\}$ and
some edge $e'$ between $\{ u_1,u_2\}$ and $\{ v_2\}$,
\end{itemize}
then the edge exchange $-\{ u_2u_3,v_1v_2,v_3v_4\}+\{ e,e',v_1v_4\}$ is contradicting.
If $c(e)=-1$ and $c(e')=-1$ for 
\begin{itemize}
\item either 
some edge $e$ between $\{ u_3,u_4\}$ and $\{ v_3\}$ and
some edge $e'$ between $\{ u_1,u_2\}$ and $\{ v_2\}$
\item or
some edge $e$ between $\{ u_3,u_4\}$ and $\{ v_4\}$ and
some edge $e'$ between $\{ u_1,u_2\}$ and $\{ v_1\}$,
\end{itemize}
then the edge exchange $-\{ u_2u_3,v_1v_2,v_2v_3,v_3v_4\}+\{ e,e',v_1v_3,v_2v_4\}$ is contradicting.
By the symmetry of the vertices of $P$, 
this implies that $c(e)=1$ for every edge $e$ between $V(P)$ and $\{ v_3,v_4\}$.

If $c(e)=-1$ for 
some edge $e$ between $\{ u_1,u_2\}$ and $\{ v_1,v_2\}$, 
then the edge exchange $-\{ u_2u_3,v_2v_3\}+\{ e,u_4v_4\}$ is contradicting.
By the symmetry of the vertices of $P$, 
this implies that $c(e)=1$ for every edge $e$ between $V(P)$ and $\{ v_1,v_2\}$.
Altogether, we obtain $c(e)=1$ for every edge $e$ between $V(P)$ and $V(Q)$.

\medskip

\noindent Next, let $(t_P,t_Q)=((1,1,1),(-1,1,-1))$.

If $c(u_2v_2)=c(u_4v_4)=-1$, 
then the edge exchange $-\{ u_1u_2,u_2u_3,v_2v_3,v_3v_4\}+\{ u_2v_2,u_4v_4,u_1u_3,v_1v_3\}$ is contradicting.
If $c(u_2v_2)=-1$ and $c(u_4v_4)=1$,
then the edge exchange $-\{ u_2u_3,v_2v_3\}+\{ u_2v_2,u_4v_4\}$ is contradicting.
By the symmetry of the vertices of $P$, 
this implies that $c(e)=1$ for every edge $e$ between $V(P)$ and $\{ v_2,v_3\}$.
If $c(u_1v_1)=-1$,
then the edge exchange $-\{ u_2u_3,v_2v_3\}+\{ e,u_3v_3\}$ is contradicting.
By the symmetry of the vertices of $P$, 
this implies that $c(e)=1$ for every edge $e$ between $V(P)$ and $\{ v_1,v_4\}$.
Altogether, we obtain $c(e)=1$ for every edge $e$ between $V(P)$ and $V(Q)$.

\medskip

\noindent Next, let $(t_P,t_Q)=((1,1,-1),(-1,1,-1))$.
 
If $c(u_2v_2)=c(u_4v_4)=-1$, 
then the edge exchange $-\{ u_1u_2,u_3u_4,v_2v_3\}+\{ u_2v_2,u_4v_4,u_1u_4\}$ is contradicting.
If $c(u_2v_2)=1$ and $c(u_4v_4)=-1$,
then the edge exchange $-\{ u_2u_3,v_2v_3\}+\{ u_2v_2,u_4v_4\}$ is contradicting.
By the symmetry between $u_3$ and $u_4$ as well as between the vertices of $Q$,
this implies that $c(e)=1$ for every edge $e$ between $\{ u_3,u_4\}$ and $V(Q)$.
If $c(u_2v_2)=-1$,
then the edge exchange $-\{ u_2u_3,v_2v_3\}+\{ u_2v_2,u_4v_4\}$ is contradicting.
By the symmetry between $u_1$ and $u_2$ as well as between the vertices of $Q$,
this implies that $c(e)=1$ for every edge $e$ between $\{ u_1,u_2\}$ and $V(Q)$.
Altogether, we obtain $c(e)=1$ for every edge $e$ between $V(P)$ and $V(Q)$.
 
\medskip

\noindent Next, let $(t_P,t_Q)=((1,1,-1),(1,1,-1))$.
 
If $c(u_2v_2)=c(u_4v_4)=-1$, 
then the edge exchange $-\{ u_1u_2,u_3u_4,v_2v_3\}+\{ u_2v_2,u_4v_4,u_1u_4\}$ is contradicting.
If $c(u_2v_2)=1$ and $c(u_4v_4)=-1$,
then the edge exchange $-\{ u_2u_3,v_2v_3\}+\{ u_2v_2,u_4v_4\}$ is contradicting.
By the symmetry between $u_3$ and $u_4$ as well as between $v_3$ and $v_4$,
this implies that $c(e)=1$ for every edge $e$ between $\{ u_3,u_4\}$ and $\{ v_3,v_4\}$.
If $c(u_2v_2)=-1$, 
then the edge exchange $-\{ u_2u_3,v_2v_3\}+\{ u_2v_2,u_4v_4\}$ is contradicting.
By the symmetry between $u_1$ and $u_2$ as well as between $v_1$ and $v_2$,
this implies that $c(e)=1$ for every edge $e$ between $\{ u_1,u_2\}$ and $\{ v_1,v_2\}$.
If $c(u_3v_2)=-1$ and $c(u_2v_3)=1$,
then the edge exchange $-\{ u_2u_3,v_2v_3\}+\{ u_3v_2,u_2v_3\}$ is contradicting.
If $c(u_3v_2)=c(u_2v_3)=-1$,
then the edge exchange $-\{ u_1u_2,u_2u_3,u_3u_4,v_2v_3\}+\{ u_3v_2,u_2v_3,u_1u_3,u_2u_4\}$ is contradicting.
By the symmetry between $u_3$ and $u_4$ as well as between $v_1$ and $v_2$,
this implies that $c(e)=1$ 
for every edge $e$ between $\{ u_3,u_4\}$ and $\{ v_1,v_2\}$
as well as
for every edge $e$ between $\{ u_1,u_2\}$ and $\{ v_3,v_4\}$.
Altogether, we obtain $c(e)=1$ for every edge $e$ between $V(P)$ and $V(Q)$.

\medskip

\noindent Finally, let $(t_P,t_Q)=((1,1,1),(1,1,1))$.

If $c(u_2v_2)=-1$ and $c(u_4v_4)=1$,
then the edge exchange $-\{ u_2u_3,v_2v_3\}+\{ u_2v_2,u_4v_4\}$ is contradicting.
By the symmetry between the vertices of $P$ as well as between the vertices of $Q$,
this implies that 
either $c(e)=-1$ for every edge $e$ between $V(P)$ and $V(Q)$
or $c(e)=1$ for every edge $e$ between $V(P)$ and $V(Q)$.
If $c(e)=-1$ for every edge $e$ between $V(P)$ and $V(Q)$,
and $R:w_1w_2w_3w_4$ is a path in $F$ of type $(1,1,-1)$, $(-1,1,-1)$, or $(1,-1,-1)$,
then the edge-exchange $-\{ u_1u_2,v_1v_2,w_3w_4\}+\{ u_1v_2,u_2v_1,w_1w_4\}$ is contradicting.
Hence, if $c(e)=-1$ for every edge $e$ between $V(P)$ and $V(Q)$,
then all paths in $F$ are of type $(1,1,1)$ or $(-1-,1,-1)$,
which implies the contradiction that $c(E(F))$ is a multiple of $3$.
Hence, 
$c(e)=1$ for every edge $e$ between $V(P)$ and $V(Q)$.
\end{proof}

\begin{figure}[h]
\begin{center}
\unitlength 1mm 
\linethickness{0.4pt}
\ifx\plotpoint\undefined\newsavebox{\plotpoint}\fi 
\begin{picture}(145,85)(0,0)
\put(12,26){\circle*{2}}
\put(42,26){\circle*{2}}
\put(72,26){\circle*{2}}
\put(102,26){\circle*{2}}
\put(132,26){\circle*{2}}
\put(22,26){\circle*{2}}
\put(52,26){\circle*{2}}
\put(82,26){\circle*{2}}
\put(112,26){\circle*{2}}
\put(142,26){\circle*{2}}
\put(12,36){\circle*{2}}
\put(42,36){\circle*{2}}
\put(72,36){\circle*{2}}
\put(102,36){\circle*{2}}
\put(132,36){\circle*{2}}
\put(22,36){\circle*{2}}
\put(52,36){\circle*{2}}
\put(82,36){\circle*{2}}
\put(112,36){\circle*{2}}
\put(142,36){\circle*{2}}
\put(12,46){\circle*{2}}
\put(42,46){\circle*{2}}
\put(72,46){\circle*{2}}
\put(102,46){\circle*{2}}
\put(132,46){\circle*{2}}
\put(22,46){\circle*{2}}
\put(52,46){\circle*{2}}
\put(82,46){\circle*{2}}
\put(112,46){\circle*{2}}
\put(142,46){\circle*{2}}
\put(12,56){\circle*{2}}
\put(42,56){\circle*{2}}
\put(72,56){\circle*{2}}
\put(102,56){\circle*{2}}
\put(132,56){\circle*{2}}
\put(22,56){\circle*{2}}
\put(52,56){\circle*{2}}
\put(82,56){\circle*{2}}
\put(112,56){\circle*{2}}
\put(142,56){\circle*{2}}
\put(12,56){\line(0,-1){30}}
\put(42,56){\line(0,-1){30}}
\put(72,56){\line(0,-1){30}}
\put(102,56){\line(0,-1){30}}
\put(132,56){\line(0,-1){30}}
\put(22,56){\line(0,-1){30}}
\put(52,56){\line(0,-1){30}}
\put(82,56){\line(0,-1){30}}
\put(112,56){\line(0,-1){30}}
\put(142,56){\line(0,-1){30}}
\put(9,31){\makebox(0,0)[cc]{$+$}}
\put(39,31){\makebox(0,0)[cc]{$+$}}
\put(69,31){\makebox(0,0)[cc]{$-$}}
\put(99,31){\makebox(0,0)[cc]{$+$}}
\put(129,31){\makebox(0,0)[cc]{$-$}}
\put(19,31){\makebox(0,0)[cc]{$+$}}
\put(49,31){\makebox(0,0)[cc]{$+$}}
\put(79,31){\makebox(0,0)[cc]{$-$}}
\put(109,31){\makebox(0,0)[cc]{$+$}}
\put(139,31){\makebox(0,0)[cc]{$-$}}
\put(9,41){\makebox(0,0)[cc]{$+$}}
\put(39,41){\makebox(0,0)[cc]{$+$}}
\put(69,41){\makebox(0,0)[cc]{$+$}}
\put(99,41){\makebox(0,0)[cc]{$-$}}
\put(129,41){\makebox(0,0)[cc]{$-$}}
\put(19,41){\makebox(0,0)[cc]{$+$}}
\put(49,41){\makebox(0,0)[cc]{$+$}}
\put(79,41){\makebox(0,0)[cc]{$+$}}
\put(109,41){\makebox(0,0)[cc]{$-$}}
\put(139,41){\makebox(0,0)[cc]{$-$}}
\put(9,51){\makebox(0,0)[cc]{$+$}}
\put(39,51){\makebox(0,0)[cc]{$-$}}
\put(69,51){\makebox(0,0)[cc]{$-$}}
\put(99,51){\makebox(0,0)[cc]{$-$}}
\put(129,51){\makebox(0,0)[cc]{$-$}}
\put(19,51){\makebox(0,0)[cc]{$+$}}
\put(49,51){\makebox(0,0)[cc]{$-$}}
\put(79,51){\makebox(0,0)[cc]{$-$}}
\put(109,51){\makebox(0,0)[cc]{$-$}}
\put(139,51){\makebox(0,0)[cc]{$-$}}
\put(6,24){\framebox(8,34)[cc]{}}
\put(36,24){\framebox(8,34)[cc]{}}
\put(66,24){\framebox(8,34)[cc]{}}
\put(96,24){\framebox(8,34)[cc]{}}
\put(126,24){\framebox(8,34)[cc]{}}
\put(16,24){\framebox(8,34)[cc]{}}
\put(46,24){\framebox(8,34)[cc]{}}
\put(76,24){\framebox(8,34)[cc]{}}
\put(106,24){\framebox(8,34)[cc]{}}
\put(136,24){\framebox(8,34)[cc]{}}
\put(5,23){\framebox(20,36)[cc]{}}
\put(35,23){\framebox(20,36)[cc]{}}
\put(65,23){\framebox(20,36)[cc]{}}
\put(95,23){\framebox(20,36)[cc]{}}
\put(125,23){\framebox(20,36)[cc]{}}
\put(90,67){\makebox(0,0)[cc]{$\frac{1}{2}$}}
\put(74,14){\makebox(0,0)[cc]{$\frac{1}{2}$}}
\put(114,8){\makebox(0,0)[cc]{$\frac{1}{2}$}}
\put(107,78){\makebox(0,0)[cc]{$\frac{1}{2}$}}
\put(75,23){\oval(50,24)[b]}
\put(90,23){\oval(90,36)[b]}
\put(60,59){\oval(90,24)[t]}
\put(75,59){\oval(130,46)[t]}
\put(74,85){\makebox(0,0)[cc]{$\frac{3}{4}$}}
\put(60,74){\makebox(0,0)[cc]{$\frac{3}{4}$}}
\put(107,59){\oval(60,32)[t]}
\put(90,59){\oval(20,10)[t]}
\end{picture}
\end{center}
\caption{Illustration of the statements in Claim \ref{claim2} and Claim \ref{claim3}.
Again, the labels on the edges correspond to 
a lower bound on the fraction of all represented edges 
that have label $1$.}\label{fig2}
\end{figure}
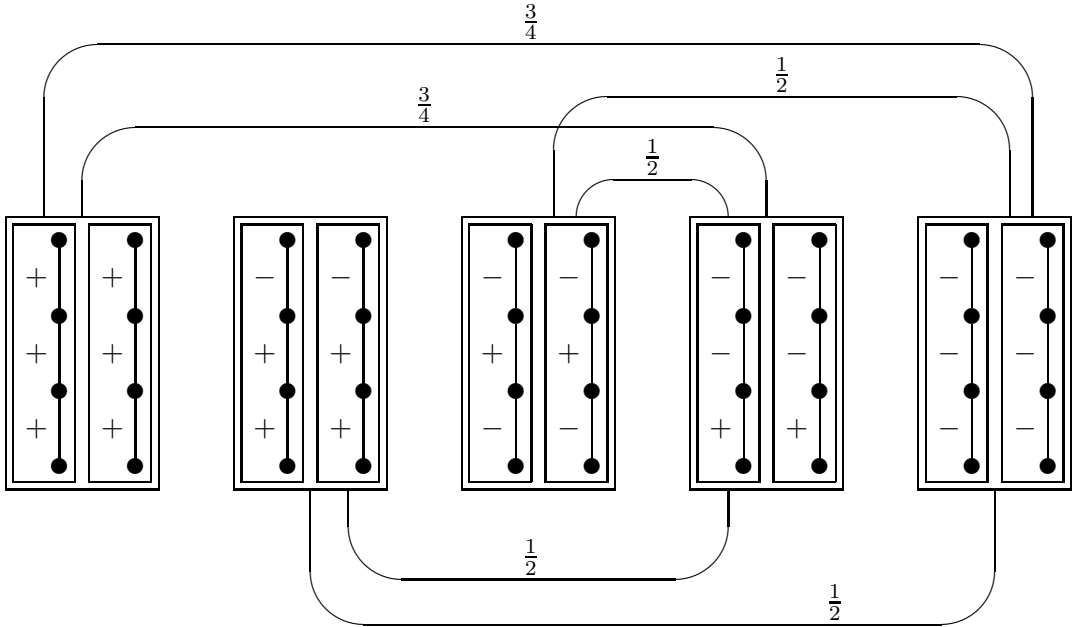

\begin{claim}\label{claim2}
Let $P:u_1u_2u_3u_4$ and $Q:v_1v_2v_3v_4$ be two distinct paths in $F$ 
of types $t_P$ and $t_Q$, respectively.
If $(t_P,t_Q)$ is 
$((1,1,1),(-1,-1,-1))$ 
or $((1,1,1),(1,-1,-1))$,
then $c(e)=1$ for at least $12$ of the $16$ edges $e$ of $K_n$ between $V(P)$ and $V(Q)$.

See Figure \ref{fig2} for an illustration.
\end{claim}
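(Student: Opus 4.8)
The plan is to prove, for both type pairs, that the set of edges $e$ between $V(P)$ and $V(Q)$ with $c(e)=-1$ forms an \emph{intersecting family}, i.e.\ any two of them share a vertex. Since these $16$ edges form a $K_{4,4}$ between $V(P)$ and $V(Q)$, which is triangle-free, an intersecting family of edges must be a star, and a star in $K_{4,4}$ has at most $4$ edges; hence at most $4$ of the $16$ edges carry label $-1$, which is exactly the assertion that at least $12$ carry label $1$. The whole argument rests on a single reduction: \emph{any two vertex-disjoint edges $e,e'$ between $V(P)$ and $V(Q)$ with $c(e)=c(e')=-1$ produce a contradicting edge-exchange.} Throughout I would use that, by (\ref{e1}) and (\ref{e2}), the path $P$ of type $(1,1,1)$ induces a complete graph of label-$1$ edges on $V(P)$, so its four vertices are completely interchangeable and any Hamiltonian path on $V(P)$ may replace $P$ inside $F$.

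To prove the reduction I would split according to the positions of the two $Q$-endpoints of $e$ and $e'$. If they lie in opposite halves $\{v_1,v_2\}$ and $\{v_3,v_4\}$ of $Q$, then, after relabelling $V(P)$ so that $e$ is incident with $u_1$ and $e'$ with $u_3$, the two-edge exchange $-\{u_2u_3,v_2v_3\}+\{e,e'\}$ removes the label-$1$ edge $u_2u_3$ and the label-$-1$ edge $v_2v_3$ and inserts the two label-$-1$ edges $e,e'$, hence lowers $c(E(F))$ from $2$ to $0$; one checks that the four resulting pieces recombine into two genuine $P_4$'s. If instead both $Q$-endpoints lie in the half $\{v_3,v_4\}$ (so one is $v_3$, the other $v_4$, where $c(v_3v_4)=-1$ in both cases), I would use the three-edge exchange $-\{u_1u_2,u_2u_3,v_3v_4\}+\{u_1v_3,u_2u_4,u_2v_4\}$, which re-routes through the label-$1$ chord $u_2u_4$ of $P$ and again drops the sum by $2$ when $c(u_1v_3)=c(u_2v_4)=-1$.

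It remains to cover the case that both $Q$-endpoints lie in $\{v_1,v_2\}$, and here the two type pairs diverge. For $((1,1,1),(-1,-1,-1))$ the reversal symmetry $v_i\leftrightarrow v_{5-i}$ of $Q$ carries $\{v_1,v_2\}$ onto $\{v_3,v_4\}$, so the three-edge exchange already handles this half and the reduction is complete. For $((1,1,1),(1,-1,-1))$ this half is genuinely different because $c(v_1v_2)=1$; there I would instead use the four-edge exchange $-\{u_1u_2,u_2u_3,v_1v_2,v_2v_3\}+\{u_1v_1,v_1v_4,u_2u_4,u_2v_2\}$, which is available since $c(v_1v_4)=1$ by (\ref{e1}), and which lowers the sum by $2$ when $c(u_1v_1)=c(u_2v_2)=-1$. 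Having covered every way in which two disjoint $-1$-edges can be placed, the intersecting-family conclusion follows. I expect the main obstacle to be precisely these same-half exchanges: because the chords of $Q$ are essentially undetermined (for type $(-1,-1,-1)$ none of $v_1v_3,v_2v_4,v_1v_4$ is known, and for type $(1,-1,-1)$ only $v_1v_4=1$ is known), the rerouting must pass through the label-$1$ clique on $V(P)$, and the delicate point is to verify in each case that the prescribed edge set actually decomposes $V(P)\cup V(Q)$ into two paths on four vertices rather than into a cycle or a longer path.
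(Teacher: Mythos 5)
Your proposal is correct, and I verified the three exchanges it rests on: in each case the prescribed edge set really does recombine $V(P)\cup V(Q)$ into two vertex-disjoint $P_4$'s, and the sum drops by exactly $2$, so each exchange is contradicting. Concretely, the two-edge exchange removes labels $1,-1$ and adds $-1,-1$; the three-edge exchange $-\{u_1u_2,u_2u_3,v_3v_4\}+\{u_1v_3,u_2u_4,u_2v_4\}$ removes $1,1,-1$ and adds $-1,1,-1$, using $c(u_2u_4)=1$ from (\ref{e2}); and the four-edge exchange $-\{u_1u_2,u_2u_3,v_1v_2,v_2v_3\}+\{u_1v_1,v_1v_4,u_2u_4,u_2v_2\}$ removes $1,1,1,-1$ and adds $-1,1,1,-1$, using $c(v_1v_4)=1$ from (\ref{e1}), which is available precisely because $(1,-1,-1)$ is not $(-1,-1,-1)$. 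Your relabelling of $V(P)$ is legitimate for exactly the reason you give: by (\ref{e1}) and (\ref{e2}) all six edges inside $V(P)$ have label $1$, so replacing $P$ by any Hamiltonian path on $V(P)$ again yields a $P_4$-factor of sum $2$ containing $Q$; this is the same symmetry device the paper itself uses.

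Where you genuinely differ from the paper is the architecture of the argument. The paper treats the two type pairs separately and counts block by block: for $((1,1,1),(-1,-1,-1))$ it shows there are at most $2$ negative edges among the blocks $\{u_1,u_2\}\times\{v_1,v_2\}$ and $\{u_3,u_4\}\times\{v_3,v_4\}$ and, by the symmetry of $P$, at most $2$ among the crossing blocks; for $((1,1,1),(1,-1,-1))$ it shows every edge from $V(P)$ to $\{v_1,v_2,v_4\}$ is positive, confining the negative edges to the star at $v_3$. You instead prove one uniform reduction, that two vertex-disjoint negative edges always produce a contradicting exchange, and then invoke the combinatorial fact that an intersecting family of edges in the triangle-free graph $K_{4,4}$ must be a star, hence has at most $4$ edges. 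Your route buys uniformity (a single argument covering both type pairs) and a cleaner structural conclusion (the negative edges form a star); the paper's route buys slightly sharper information in the second case, namely that the star must be centered at $v_3$. Since only the count ``at least $12$ of $16$'' is used later in the proof of Theorem \ref{theorem2}, both surpluses are equally dispensable, and the two proofs are of comparable length and difficulty.
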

\begin{proof}[Proof of Claim \ref{claim2}]
First, let $(t_P,t_Q)=((1,1,1),(-1,-1,-1))$.

If $c(e)=c(e')=-1$ for some edge $e$ between $\{ u_1,u_2\}$ and $\{ v_1,v_2\}$
and some edge $e'$ between $\{ u_3,u_4\}$ and $\{ v_3,v_4\}$,
then the edge exchange $-\{ u_2u_3,v_2v_3\}+\{ e,e'\}$ is contradicting.
If $c(u_1v_1)=c(u_2v_1)=c(u_2v_2)=-1$ and $c(u_3v_3)=1$,
then the edge exchange $-\{ u_1u_2,u_2u_3,v_1v_2,v_2v_3\}+\{ u_1v_1,u_2v_1,u_2v_2,u_3v_3\}$ is contradicting.
By symmetry, this implies $c(e)=-1$ for at most $2$ of the $8$ edges $e$
between either $\{ u_1,u_2\}$ and $\{ v_1,v_2\}$
or $\{ u_3,u_4\}$ and $\{ v_3,v_4\}$.
By the symmetry between the vertices of $P$, this also implies that 
$c(e)=-1$ for at most $2$ of the $8$ edges $e$
between either $\{ u_1,u_2\}$ and $\{ v_3,v_4\}$
or $\{ u_3,u_4\}$ and $\{ v_1,v_2\}$.
Altogether, we obtain that $c(e)=-1$ for at most $4$ edges $e$ between $V(P)$ and $V(Q)$.

\medskip

\noindent Next, let $(t_P,t_Q)=((1,1,1),(1,-1,-1))$.

If $c(u_1v_1)=c(u_2v_2)=-1$,
then the edge exchange $-\{ u_1u_2,u_3u_4,v_1v_2,v_3v_4\}+\{ u_1v_1,u_2v_2,u_1u_4,v_1v_4\}$ is contradicting.
If $c(u_1v_1)=-1$ and $c(u_2v_2)=1$,
then the edge exchange $-\{ u_1u_2,u_2u_3,v_1v_2\}+\{ u_1v_1,u_2v_2,u_1u_3\}$ is contradicting.
By the symmetry between the vertices of $P$, this implies that 
$c(e)=1$ for every edge $e$ between $V(P)$ and $v_1$.
If $c(u_4v_4)=-1$,
then the edge exchange $-\{ u_3u_4,v_1v_2\}+\{ u_1v_1,u_4v_4\}$ is contradicting.
By the symmetry between the vertices of $P$ and the symmetry between $v_2$ and $v_4$, 
this implies that 
$c(e)=1$ for every edge $e$ between $V(P)$ and $\{ v_2,v_4\}$.
Altogether, we obtain that $c(e)=-1$ for at most $4$ edges $e$ between $V(P)$ and $V(Q)$.
\end{proof}

\begin{claim}\label{claim3}
Let $P:u_1u_2u_3u_4$ and $Q:v_1v_2v_3v_4$ be two distinct paths in $F$ 
of types $t_P$ and $t_Q$, respectively.
If $(t_P,t_Q)$ is 
$((1,1,-1),(1,-1,-1))$,
$((-1,1,-1),(1,-1,-1))$,
$((1,1,-1),(-1,-1,-1))$, or
$((-1,1,-1),(-1,-1,-1))$,
then $c(e)=1$ for at least $8$ of the $16$ edges $e$ of $K_n$ between $V(P)$ and $V(Q)$.

See Figure \ref{fig2} for an illustration.
\end{claim}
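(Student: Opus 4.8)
The plan is to exploit a single uniform feature shared by all four type pairs: the middle edge of $P$ carries label $1$ (its type is $(1,1,-1)$ or $(-1,1,-1)$, both of which have middle coordinate $1$), while the middle edge of $Q$ carries label $-1$ (its type is $(1,-1,-1)$ or $(-1,-1,-1)$, both of which have middle coordinate $-1$). Hence $c(u_2u_3)+c(v_2v_3)=0$ in every one of the four cases. This cancellation is exactly what makes the simplest double edge-exchange strong enough here, and it also explains why the bound $8$ is weaker than those in Claims~\ref{claim1} and~\ref{claim2}, where the two middle edges carried equal labels.

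First I would delete the two middle edges $u_2u_3$ and $v_2v_3$ from $F$, leaving the four segments $u_1u_2$, $u_3u_4$, $v_1v_2$, $v_3v_4$ (each a single edge) while all other paths of $F$ remain intact. Any single edge between $\{u_1,u_2\}$ and $\{v_1,v_2\}$ rejoins the first two segments into a $P_4$, and any single edge between $\{u_3,u_4\}$ and $\{v_3,v_4\}$ rejoins the last two into a $P_4$; together with the untouched paths this is again a $P_4$-factor. If two such edges $e$ and $e'$ both had label $-1$, the edge-exchange $-\{u_2u_3,v_2v_3\}+\{e,e'\}$ would yield a $P_4$-factor $F'$ with $c(E(F'))=2-0+(-2)=0$, a contradiction. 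Therefore all four edges between $\{u_1,u_2\}$ and $\{v_1,v_2\}$ carry label $1$, or all four edges between $\{u_3,u_4\}$ and $\{v_3,v_4\}$ do.

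Next I would run the identical argument with the complementary rejoining, pairing segment $u_1u_2$ with $v_3v_4$ and segment $u_3u_4$ with $v_1v_2$, to conclude that all four edges between $\{u_1,u_2\}$ and $\{v_3,v_4\}$ carry label $1$, or all four edges between $\{u_3,u_4\}$ and $\{v_1,v_2\}$ do. The sixteen cross edges partition into these four disjoint groups of four, and the two dichotomies select one full group from the ``diagonal'' pair and one full group from the ``anti-diagonal'' pair; since no edge lies in two groups, the selected groups are automatically disjoint, giving at least $4+4=8$ cross edges of label $1$, as required.

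I do not expect a genuine obstacle. Each rejoining manifestly produces simple copies of $P_4$ on disjoint vertex sets, so every exchange is legitimate, and no largeness of $n$ is needed for this purely local statement. The only point requiring care is the bookkeeping that the two chosen groups of four edges are disjoint, which is immediate from the quadrant partition. The real content is the opening observation that the two middle edges cancel; once that is in hand, the remainder is the routine quadrant count above.
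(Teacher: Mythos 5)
Your proof is correct, and for two of the four type pairs it is genuinely leaner than what the paper does. The engine is the same in both arguments: contradicting edge-exchanges that delete the two middle edges $u_2u_3$ and $v_2v_3$, whose labels $+1$ and $-1$ cancel, so that adding any two cross edges both labelled $-1$ drops $c(E(F))$ from $2$ to $0$. For the pairs $((1,1,-1),(-1,-1,-1))$ and $((-1,1,-1),(-1,-1,-1))$ the paper argues exactly as you do (diagonal quadrant exchange plus the reversal symmetry of $Q$, which is your anti-diagonal exchange in disguise). Where you diverge is on the pairs with $t_Q=(1,-1,-1)$: there the paper runs longer case analyses with more elaborate exchanges such as $-\{u_1u_2,u_2u_3,v_1v_2\}+\{u_3v_1,u_2v_4,u_1u_4\}$, leaning on the symmetries derived from (\ref{e1}) and (\ref{e2}), and in return pins down a specific set of eight $+1$ edges (for instance all four edges at $v_1$ together with the edges between $\{u_1,u_2\}$ and $\{v_2,v_4\}$). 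Your argument yields only a disjunction --- one full quadrant out of $\{u_1,u_2\}\times\{v_1,v_2\}$ versus $\{u_3,u_4\}\times\{v_3,v_4\}$, and one out of the anti-diagonal pair --- but since the quadrants are pairwise disjoint this still gives the required eight edges, and the downstream use of Claim \ref{claim3} in bounding $c(E(K_n))$ only ever invokes the count $8$, never the location of those edges. So your uniform treatment, resting on the single observation that the middle labels cancel for all four type pairs, would suffice verbatim in place of the paper's case-by-case proof; the only thing it forfeits is structural information the paper never uses.
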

\begin{proof}[Proof of Claim \ref{claim3}]
First, let $(t_P,t_Q)=((1,1,-1),(1,-1,-1))$.

If $c(u_3v_1)=c(u_2v_4)=-1$,
then the edge exchange $-\{ u_2u_3,v_2v_3\}+\{ u_3v_1,u_2v_4\}$ is contradicting.
If $c(u_3v_1)=-1$ and $c(u_2v_4)=1$,
then the edge exchange $-\{ u_1u_2,u_2u_3,v_1v_2\}+\{ u_3v_1,u_2v_4,u_1u_4\}$ is contradicting.
If $c(u_1v_2)=1$ and $c(u_2v_1)=-1$,
then the edge exchange $-\{ u_1u_2,v_1v_2\}+\{ u_1v_2,u_2v_1\}$ is contradicting.
If $c(u_1v_2)=c(u_2v_1)=-1$,
then the edge exchange $-\{ u_1u_2,u_2u_3,v_1v_2,v_2v_3\}+\{ u_1v_2,u_2v_1,u_1u_4,v_1v_4\}$ is contradicting.
By the symmetry between $u_1$ and $u_2$ as well as between $u_3$ and $u_4$, 
it follows that $c(e)=1$ for every edge $e$ between $V(P)$ and $v_1$.
If $c(u_1v_2)=-1$,
then the edge exchange $-\{ u_1u_2,v_1v_2\}+\{ u_1v_2,u_2v_1\}$ is contradicting.
By the symmetry between $u_1$ and $u_2$ as well as between $v_2$ and $v_4$,
it follows that $c(e)=1$ for every edge $e$ between $\{ u_1,u_2\}$ and $\{ v_2,v_4\}$.
Altogether, we obtain that $c(e)=1$ for at least $8$ edges $e$ between $V(P)$ and $V(Q)$.

\medskip

\noindent Next, let $(t_P,t_Q)=((-1,1,-1),(1,-1,-1))$.

If $c(e)=c(e')=-1$ for 
some edge $e$ between $\{ u_1,u_2\}$ and $v_2$
and 
some edge $e'$ between $\{ u_3,u_4\}$ and $v_4$,
then the edge exchange $-\{ u_2u_3,v_1v_2,v_3v_4\}+\{ e,e',v_1v_4\}$ is contradicting.
If $c(e)=c(e')=-1$ for 
some edge $e$ between $\{ u_1,u_2\}$ and $v_1$
and 
some edge $e'$ between $\{ u_3,u_4\}$ and $v_3$,
then the edge exchange $-\{ u_2u_3,v_2v_3\}+\{ e,e'\}$ is contradicting.
By the symmetry between the vertices of $P$,
this implies that $c(e)=1$ for at least $8$ edges $e$ between $V(P)$ and $V(Q)$.

\medskip

\noindent Next, let $(t_P,t_Q)=((1,1,-1),(-1,-1,-1))$.

If $c(e)=c(e')=-1$ for 
some edge $e$ between $\{ u_1,u_2\}$ and $\{ v_1,v_2\}$
and 
some edge $e'$ between $\{ u_3,u_4\}$ and $\{ v_3,v_4\}$,
then the edge exchange $-\{ u_2u_3,v_2v_3\}+\{ e,e'\}$ is contradicting.
By symmetry,
this implies that $c(e)=1$ for at least $8$ edges $e$ between $V(P)$ and $V(Q)$.

\medskip

\noindent Finally, let $(t_P,t_Q)=((-1,1,-1),(-1,-1,-1))$.

If $c(e)=c(e')=-1$ for 
some edge $e$ between $\{ u_1,u_2\}$ and $\{ v_1,v_2\}$
and 
some edge $e'$ between $\{ u_3,u_4\}$ and $\{ v_3,v_4\}$,
then the edge exchange $-\{ u_2u_3,v_2v_3\}+\{ e,e'\}$ is contradicting.
By symmetry,
this implies that $c(e)=1$ for at least $8$ edges $e$ between $V(P)$ and $V(Q)$.
\end{proof}

\begin{claim}\label{claim4}
If $P:u_1u_2u_3u_4$ and $Q:v_1v_2v_3v_4$ are two distinct paths in $F$ 
of types $(1,-1,-1)$ and $(-1,-1,-1)$, respectively,
then $c(e)=1$ for at least $4$ of the $16$ edges $e$ of $K_n$ between $V(P)$ and $V(Q)$.
\end{claim}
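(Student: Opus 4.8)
The plan is to argue exactly as in the earlier claims, by exhibiting \emph{contradicting} edge-exchanges and then collecting the resulting restrictions into a short case analysis. The first observation I would make is that among the six edges of $P$ and $Q$ only the single edge $u_1u_2$ carries the label $1$; every other edge of $P$ and $Q$ has label $-1$. Hence a two-edge exchange that removes one edge of $P$ and one edge of $Q$ and re-inserts two cross edges can be contradicting only if the removed pair has total label $0$, that is, only if it is $\{u_1u_2,e\}$ for an end edge $e$ of $Q$: removing any other pair gives $c(E_{\mathrm{out}})=-2$, which would force $c(E_{\mathrm{in}})=-4$, an impossibility. The step I expect to be the real crux (as opposed to a routine computation) is precisely this bookkeeping, since it pins down exactly the two usable exchanges and explains why the weak bound of $4$ is the right target here.

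Concretely, I would use the following two exchanges. Deleting $u_1u_2$ from $P$ leaves the isolated vertex $u_1$ and the path $u_2u_3u_4$ with endpoints $u_2,u_4$. Deleting the end edge $v_1v_2$ of $Q$ leaves the isolated vertex $v_1$ and the path $v_2v_3v_4$ with endpoints $v_2,v_4$. Re-joining $u_1$ to an endpoint of $v_2v_3v_4$ and $v_1$ to an endpoint of $u_2u_3u_4$ yields two vertex-disjoint $P_4$'s on $V(P)\cup V(Q)$, hence a $P_4$-factor, and since $c(\{u_1u_2,v_1v_2\})=0$ this exchange is contradicting as soon as both new cross edges can be chosen with label $-1$. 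Thus we may not simultaneously have a $-1$ edge from $u_1$ to $\{v_2,v_4\}$ and a $-1$ edge from $v_1$ to $\{u_2,u_4\}$. Deleting instead the other end edge $v_3v_4$ of $Q$ and arguing identically (the surviving $Q$-path $v_1v_2v_3$ has endpoints $v_1,v_3$, and the freed vertex is $v_4$) shows that we may not simultaneously have a $-1$ edge from $u_1$ to $\{v_1,v_3\}$ and a $-1$ edge from $v_4$ to $\{u_2,u_4\}$.

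To finish, I would run a case analysis driven by the four edges at $u_1$. Write $A_1$ for ``$u_1$ sends a $-1$ edge to $\{v_2,v_4\}$'', $A_2$ for ``$u_1$ sends a $-1$ edge to $\{v_1,v_3\}$'', and $B_1,B_2$ for the analogous statements ``$\{u_2,u_4\}$ sends a $-1$ edge to $v_1$'' and ``$\{u_2,u_4\}$ sends a $-1$ edge to $v_4$''; the two exchanges say exactly $\neg(A_1\wedge B_1)$ and $\neg(A_2\wedge B_2)$. If $\neg A_1$ and $\neg A_2$, then all four edges at $u_1$ have label $1$. If $A_1$ and $A_2$, then $\neg B_1$ and $\neg B_2$ produce the four $+1$ edges $u_2v_1,u_4v_1,u_2v_4,u_4v_4$. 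If exactly one of $A_1,A_2$ holds, say $A_1$ and $\neg A_2$, then $\neg B_1$ gives $c(u_2v_1)=c(u_4v_1)=1$ while $\neg A_2$ gives $c(u_1v_1)=c(u_1v_3)=1$, again four $+1$ edges; the subcase $\neg A_1$ and $A_2$ is symmetric. In every case at least $4$ of the $16$ edges between $V(P)$ and $V(Q)$ have label $1$.

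The only point that needs care rather than ingenuity is checking that the four guaranteed $+1$ edges are genuinely distinct in each case (they are, since in the mixed cases the two edges at $u_1$ and the two edges at $v_1$ share no endpoint), together with the routine verification that each re-joining really produces two vertex-disjoint $P_4$'s spanning $V(P)\cup V(Q)$, so that the exchanged graph is again a $P_4$-factor of $K_n$.
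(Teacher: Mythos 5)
Your proof is correct and follows essentially the same route as the paper: you use exactly the paper's two contradicting edge-exchanges, $-\{u_1u_2,v_1v_2\}+\{e,e'\}$ with $e$ between $u_1$ and $\{v_2,v_4\}$ and $e'$ between $\{u_2,u_4\}$ and $v_1$, and $-\{u_1u_2,v_3v_4\}+\{e,e'\}$ with $e$ between $u_1$ and $\{v_1,v_3\}$ and $e'$ between $\{u_2,u_4\}$ and $v_4$. Your explicit four-case analysis simply spells out what the paper compresses into ``by symmetry,'' and your preliminary bookkeeping (that any two-edge exchange inserting two cross edges must remove $u_1u_2$ together with an end edge of $Q$) is a helpful but inessential addition.
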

\begin{proof}[Proof of Claim \ref{claim4}]
If $c(e)=c(e')=-1$ for 
some edge $e$ between $u_1$ and $\{ v_1,v_3\}$
and 
some edge $e'$ between $\{ u_2,u_4\}$ and $v_4$,
then the edge exchange $-\{ u_1u_2,v_3v_4\}+\{ e,e'\}$ is contradicting.
If $c(e)=c(e')=-1$ for 
some edge $e$ between $u_1$ and $\{ v_2,v_4\}$
and 
some edge $e'$ between $\{ u_2,u_4\}$ and $v_1$,
then the edge exchange $-\{ u_1u_2,v_1v_2\}+\{ e,e'\}$ is contradicting.
By symmetry,
this implies that $c(e)=1$ for at least $4$ edges $e$ between $V(P)$ and $V(Q)$.
\end{proof}
Trivially, the $P_4$-factor $F$ of $K_n$ consists of exactly $\frac{n}{4}$ paths.

Let $x_1,\ldots,x_5\in [0,1]$ be such that 
\begin{itemize}
\item $F$ contains $\frac{x_1n}{4}$ paths of type $(1,1,1)$,
\item $F$ contains $\frac{x_2n}{4}$ paths of type $(1,1,-1)$,
\item $F$ contains $\frac{x_3n}{4}$ paths of type $(-1,1,-1)$,
\item $F$ contains $\frac{x_4n}{4}$ paths of type $(1,-1,-1)$, and
\item $F$ contains $\frac{x_5n}{4}$ paths of type $(-1,-1,-1)$.
\end{itemize}
Since 
\begin{eqnarray}
\frac{n}{4} &=& \frac{x_1n}{4}+\frac{x_2n}{4}+\frac{x_3n}{4}+\frac{x_4n}{4}+\frac{x_5n}{4},\mbox{ and}\nonumber\\
c(E(F))&=&\frac{3x_1n}{4}+\frac{x_2n}{4}-\frac{x_3n}{4}-\frac{x_4n}{4}-\frac{3x_5n}{4}=2,\label{e3}
\end{eqnarray}
we obtain 
\begin{eqnarray}\label{e4}
x_1+x_2+x_3+x_4+x_5 & = & 1,\mbox{ and}\label{e4}\\
3x_1+x_2-x_3-x_4-3x_5 & \geq & 0.\label{e5}
\end{eqnarray}
The following claim is the first point within the current proof
that requires $n$ to be sufficiently large.

\begin{claim}\label{claim5}
$F$ contains at least $3$ paths that are of type $(1,1,-1)$, $(-1,1,-1)$, or $(1,-1,-1)$.
\end{claim}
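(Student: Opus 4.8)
The plan is to argue by contradiction: suppose that $F$ has at most two paths of type $(1,1,-1)$, $(-1,1,-1)$, or $(1,-1,-1)$. Write $a=\frac{x_1n}{4}$, $b=\frac{x_5n}{4}$, and $m=\frac{(x_2+x_3+x_4)n}{4}$ for the numbers of paths of type $(1,1,1)$, of type $(-1,-1,-1)$, and of mixed type, respectively, so that the assumption reads $m\le 2$. The first step is a divisibility argument. Each path of type $(1,1,1)$ or $(-1,-1,-1)$ contributes $+3$ or $-3$ to $c(E(F))$, while each mixed path contributes $+1$ (type $(1,1,-1)$) or $-1$ (types $(-1,1,-1)$, $(1,-1,-1)$); thus $c(E(F))=3(a-b)+s$, where $s$ is the sum of the mixed contributions and $|s|\le m\le 2$. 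Since $c(E(F))=2$, we need $s\equiv 2\pmod 3$, so $s\in\{-1,2\}$. Checking the sign patterns leaves exactly two possibilities: either $m=1$ with the single mixed path of type $(-1,1,-1)$ or $(1,-1,-1)$ and $a=b+1$, or $m=2$ with both mixed paths of type $(1,1,-1)$ and $a=b$. In both cases $a\ge b$ and $|a-b|\le 1$.

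The second step records that $a$ and $b$ are large. Since $F$ has exactly $\frac n4$ paths, we have $a+b=\frac n4-m\ge\frac n4-2$, and combined with $|a-b|\le 1$ this forces $\min\{a,b\}\ge\frac n8-2$, which is at least $1$ once $n$ is large.

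The third step derives the contradiction $c(E(K_n))>0$ from $c(E(K_n))=0$. Let $A$ and $B$ be the vertex sets covered by the $(1,1,1)$-paths and by the $(-1,-1,-1)$-paths, so $|A|=4a$ and $|B|=4b$. By $(\ref{e1})$, $(\ref{e2})$, and the case $((1,1,1),(1,1,1))$ of Claim \ref{claim1}, every edge inside $A$ has label $1$, so these edges contribute exactly $\binom{4a}{2}$; the edges inside $B$ trivially contribute at least $-\binom{4b}{2}$; by Claim \ref{claim2} each of the $ab$ pairs of paths between $A$ and $B$ carries at least $12$ of its $16$ edges with label $1$, contributing at least $8$ per pair and hence at least $8ab$ in total; and every remaining edge meets one of the at most $8$ vertices lying on mixed paths, so these contribute at least $-8n$. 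Using $a\ge b$ to write $\binom{4a}{2}-\binom{4b}{2}=(a-b)\big(8(a+b)-2\big)\ge 0$, we obtain
\[
c(E(K_n))\ \ge\ \binom{4a}{2}-\binom{4b}{2}+8ab-8n\ \ge\ 8ab-8n,
\]
which is positive for large $n$ because $8ab\ge 8\big(\tfrac n8-2\big)^2=\Omega(n^2)$, contradicting $c(E(K_n))=0$.

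The delicate point is the first step: a priori $m=1$ or $m=2$ is compatible with $c(E(F))=2$, and only the divisibility analysis both forbids the remaining sign patterns and simultaneously pins down $a-b\in\{0,1\}$. This balance is exactly what makes both $A$ and $B$ of size $\Omega(n)$, so that the dominant positive term $8ab$ survives. It is worth noting that the edge count is robust — it needs no information about the edges inside $B$, nor about the precise types of the few mixed paths — since the decisive positive contribution comes solely from the $A$--$B$ edges via Claim \ref{claim2}; this is precisely the reason $n$ must be taken sufficiently large.
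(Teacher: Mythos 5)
Your proof is correct, and it uses the same raw ingredients as the paper --- the observations (\ref{e1}) and (\ref{e2}), Claim \ref{claim1}, Claim \ref{claim2}, and the crude $-8n$ bound for all edges meeting the at most $8$ vertices on mixed paths --- but the mechanism producing the decisive $\Omega(n^2)$ term is genuinely different. The paper extracts only $a\ge b$ from $c(E(F))=2$ (in its notation $n_+\ge n_-$, with $n_+=4a$, $n_-=4b$) and then shows by a purely algebraic manipulation that
\[
\binom{4a}{2}-\binom{4b}{2}+8ab\;=\;\tfrac{1}{2}\left(n_+^2-n_-^2+n_+n_-\right)+O(n)\;\ge\;\tfrac{1}{8}\,(n_++n_-)^2+O(n),
\]
which is $\Omega(n^2)$ for \emph{every} split with $a\ge b$, including the totally unbalanced case $b=0$, where the positivity comes from $\binom{4a}{2}$ rather than from the cross edges. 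You instead run a mod-$3$ analysis of $c(E(F))=3(a-b)+s$ to pin down the composition of $F$ (either $m=1$ with one path of type $(-1,1,-1)$ or $(1,-1,-1)$ and $a=b+1$, or $m=2$ with two paths of type $(1,1,-1)$ and $a=b$), deduce $\min\{a,b\}\ge n/8-2$, and then discard the nonnegative term $\binom{4a}{2}-\binom{4b}{2}$ entirely, letting the cross term $8ab$ alone carry the contradiction; your case analysis and the final estimate are both sound. Your divisibility step is in the same spirit as the argument the paper uses inside the last case of the proof of Claim \ref{claim1}, and it buys a simpler final inequality together with sharper structural information about $F$; the price is robustness --- your argument would collapse without the near-balance $|a-b|\le 1$, since $8ab-8n$ by itself can be negative, whereas the paper's quadratic identity needs no balance at all and yields an explicit threshold ($n\ge 84$).
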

\begin{proof}[Proof of Claim \ref{claim5}]
Suppose, for a contradiction, that $F$ contains at most $2$ paths that are not of type $(1,1,1)$ or $(-1,-1,-1)$,
that is, $\frac{(x_2-x_3-x_4)n}{4}\leq \frac{(x_2+x_3+x_4)n}{4}\leq 2$.
By (\ref{e3}), this implies $x_1\geq x_5$.
Since $n'=n-(x_2+x_3+x_4)n$ is the number of vertices on paths of type $(1,1,1)$ or $(-1,-1,-1)$,
we have $n'\geq n-8$.
Let $n_+=x_1n$ and $n_-=x_5n$.
Clearly, we have $n'=n_++n_-$ and $n_+\geq n_-$.
By (\ref{e1}), (\ref{e2}), Claim \ref{claim1}, and Claim \ref{claim2}, we obtain
\begin{eqnarray*}
c(E(K_n)) & \geq & {n_+\choose 2}-{n_-\choose 2}+\frac{1}{2}n_+n_--8n\\
&=& \frac{1}{2}n_+^2-\frac{1}{2}n_-^2+\frac{1}{2}n_+n_--\frac{1}{2}n_++\frac{1}{2}n_--8n\\
&\geq & \frac{1}{2}\left(n_+^2-n_-^2+n_+n_-\right)-\frac{17}{2}n\\
&=& \frac{1}{2}\left(\left(\frac{n'}{2}\right)^2+
\underbrace{\left(n_+-\frac{n'}{2}\right)\left(\frac{5n'}{2}-n_+\right)}_{\geq 0}\right)
-\frac{17}{2}n\\
&\geq & \frac{(n-8)^2}{8}-\frac{17}{2}n\\
&> & \frac{1}{8}n^2-\frac{21}{2}n,
\end{eqnarray*}
which is positive for $n\geq 84$.
Hence, for sufficiently large $n$, we obtain the desired contradiction.
\end{proof}

\begin{claim}\label{claim6}
If $P$ is a path in $F$ of type $(1,1,-1)$, $(-1,1,-1)$, or $(1,-1,-1)$, 
then there is no $P_4$-factor $F'$ in $K_n$ with $c(E(F'))=-2$ that contains $P$.
\end{claim}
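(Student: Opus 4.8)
The plan is to prove the claim by passing to the complementary labeling $-c$ and invoking (\ref{e1}) on both sides. The guiding observation is that $-c$ is zero-sum exactly when $c$ is, that $c$ and $-c$ have the same zero-sum $P_4$-factors, and that the hypothetical $F'$ satisfies $-c(E(F'))=2$; hence the pair $(-c,F')$ is an instance of the very same situation as $(c,F)$, and every structural fact proved so far for $(c,F)$ has a mirror version for $(-c,F')$.

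First I would read off, from the $F$-side, the value forced on the \emph{end-edge} $u_1u_4$ of $P$. Since each of the three admissible types $(1,1,-1)$, $(-1,1,-1)$, $(1,-1,-1)$ is different from $(-1,-1,-1)$, relation (\ref{e1}) applied to the path $P$ of the factor $F$ gives $c(u_1u_4)=1$. Next, assuming for contradiction that a $P_4$-factor $F'$ with $c(E(F'))=-2$ contains $P$, I would compute the type of $P$ with respect to $-c$: negating the labels turns $(1,1,-1)$, $(-1,1,-1)$, $(1,-1,-1)$ into $(-1,-1,1)$, $(1,-1,1)$, $(-1,1,1)$, respectively, and none of these equals $(-1,-1,-1)$ even after reversal. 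Therefore the $-c$-version of (\ref{e1}), applied to $P$ inside $F'$, forces the same end-edge to satisfy $-c(u_1u_4)=1$, that is $c(u_1u_4)=-1$, contradicting $c(u_1u_4)=1$.

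The argument is short, so the single point deserving care---and essentially the only obstacle---is to justify that the whole framework transfers to $(-c,F')$. Concretely, one must verify that (\ref{e1}) was derived using nothing beyond the facts that $F$ is a $P_4$-factor with $c(E(F))=2$ in a $K_n$ admitting no zero-sum $P_4$-factor; in particular its proof (the edge-exchange $-\{e\}+\{u_1u_4\}$ lowering the factor sum by $2$) does \emph{not} rely on the later normalization that forbids type $(1,-1,1)$. Granting this, the $-c$-analogue of (\ref{e1}) is legitimate for $F'$, and the type computation above closes the claim.
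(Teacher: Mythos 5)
Your proof is correct and, once unwound, is essentially the paper's own argument: the paper applies (\ref{e1}) to $P$ as a path of $F$ to get $c(u_1u_4)=1$ and then performs the single exchange $-\{u_3u_4\}+\{u_1u_4\}$ on $F'$, raising its sum from $-2$ to $0$ — which is precisely the edge-exchange hidden inside your ``mirrored (\ref{e1})'' applied to $(-c,F')$, since an edge $e$ of $P$ with $-c(e)=1$ is an edge with $c(e)=-1$. The symmetry transfer you flag as the only delicate point is indeed legitimate: (\ref{e1}) is derived before the normalization excluding type $(1,-1,1)$ and uses only that the factor has sum $+2$ under the given zero-sum labeling and that $K_n$ admits no zero-sum $P_4$-factor, all of which hold for $(-c,F')$.
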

\begin{proof}[Proof of Claim \ref{claim6}]
If $P$ is $u_1u_2u_3u_4$, then, by (\ref{e1}), 
$(V(K_n),(E(F')\setminus \{ u_3u_4\})\cup \{ u_1u_4\})$
is a $P_4$-factor $F''$ in $K_n$ with $c(E(F''))=0$,
which is a contradiction.
\end{proof}

\begin{claim}\label{claim7}
If $P:u_1u_2u_3u_4$ and $Q:v_1v_2v_3v_4$ are two distinct paths in $F$ 
of type $(-1,1,-1)$,
then $c(e)=1$ for each of the $16$ edges $e$ between $V(P)$ and $V(Q)$.
\end{claim}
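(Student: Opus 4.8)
The plan is to exploit the strong internal symmetry of type-$(-1,1,-1)$ paths together with the tool provided by Claim \ref{claim6}. First I would record the internal structure of $P$ and $Q$: since neither has type $(-1,-1,-1)$, (\ref{e1}) gives $c(u_1u_4)=c(v_1v_4)=1$, and (\ref{e2}) gives $c(u_1u_3)=c(u_2u_4)=c(v_1v_3)=c(v_2v_4)=1$. Hence inside $V(P)$ the only edges with label $-1$ are the matching $\{u_1u_2,u_3u_4\}$, and inside $V(Q)$ only $\{v_1v_2,v_3v_4\}$; in particular each of $P$ and $Q$ has the complete vertex symmetry noted before Claim \ref{claim1}. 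This splits the $16$ cross edges into the four blocks $B_{11}=\{u_1,u_2\}\times\{v_1,v_2\}$, $B_{22}=\{u_3,u_4\}\times\{v_3,v_4\}$, $B_{12}=\{u_1,u_2\}\times\{v_3,v_4\}$, and $B_{21}=\{u_3,u_4\}\times\{v_1,v_2\}$.

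Next I would use the ``remove the two middle edges'' exchange. Deleting the $+1$ edges $u_2u_3$ and $v_2v_3$ leaves the four pieces $u_1u_2$, $u_3u_4$, $v_1v_2$, $v_3v_4$, and reattaching them with a cross edge $e_1$ joining $\{u_1,u_2\}$ to $\{v_1,v_2\}$ and a cross edge $e_2$ joining $\{u_3,u_4\}$ to $\{v_3,v_4\}$ produces a $P_4$-factor $F'$ with $c(E(F'))=2-2+c(e_1)+c(e_2)=c(e_1)+c(e_2)$. Were this $0$, then $F'$ would be a zero-sum factor; hence $c(e_1)=c(e_2)$ for all such pairs, so $c$ is constant, say $\alpha$, on $B_{11}\cup B_{22}$. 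The other pairing of the four pieces (joining $\{u_1,u_2\}$ to $\{v_3,v_4\}$ and $\{u_3,u_4\}$ to $\{v_1,v_2\}$) is an equally valid recombination and shows $c$ is constant, say $\beta$, on $B_{12}\cup B_{21}$. It now suffices to prove $\alpha=\beta=1$.

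The hard part is that the bad value $\alpha=-1$ cannot be excluded by a zero-sum exchange on $V(P)\cup V(Q)$ alone: if $\alpha=-1$ then each of the $4$-sets $\{u_1,u_2,v_1,v_2\}$ and $\{u_3,u_4,v_3,v_4\}$ spans a $K_4$ all of whose labels are $-1$, and a short parity check on the cut between these two $4$-sets shows that, at least when $B_{12}$ and $B_{21}$ also carry label $1$, no pair of vertex-disjoint $P_4$'s on these eight vertices has edge-sum exactly $-4$, which is what a replacement yielding a zero-sum factor would require. I would therefore route the contradiction through Claim \ref{claim6} instead. Assuming $\alpha=-1$, replace $P$ and $Q$ by the two all-$(-1)$ paths $u_1u_2v_2v_1$ and $u_3u_4v_4v_3$; this yields a $P_4$-factor $F'$ with $c(E(F'))=2-(-1)-(-1)+(-3)+(-3)=-2$. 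By Claim \ref{claim5}, $F$ contains at least three paths of type $(1,1,-1)$, $(-1,1,-1)$, or $(1,-1,-1)$; since $P$ and $Q$ are only two of them, a third such path $R$ survives unchanged in $F'$, and Claim \ref{claim6} applied to $R$ contradicts $c(E(F'))=-2$. Hence $\alpha=1$, and the same construction with the $4$-sets $\{u_1,u_2,v_3,v_4\}$ and $\{u_3,u_4,v_1,v_2\}$ (replacing $P,Q$ by $u_1u_2v_4v_3$ and $u_3u_4v_2v_1$) rules out $\beta=-1$, giving $\beta=1$. Thus every cross edge has label $1$, completing the proof.
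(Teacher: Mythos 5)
Your proof is correct and follows essentially the same route as the paper: the paper also deletes the two middle $(+1)$ edges $u_2u_3,v_2v_3$ and adds a pair of cross edges (after using the complete vertex symmetry to assume the bad edge is $u_1v_1$), concludes that the partner edge must also be $-1$, and then contradicts the resulting $P_4$-factor of weight $-2$ via Claims \ref{claim5} and \ref{claim6}. Your block-constancy observation ($\alpha$ on $B_{11}\cup B_{22}$, $\beta$ on $B_{12}\cup B_{21}$) and the separate treatment of the second pairing are just a more explicit bookkeeping of what the paper compresses into its symmetry reduction.
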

\begin{proof}[Proof of Claim \ref{claim7}]
Suppose, for a contradiction, that $c(e)=-1$ for some edge $e$ between $V(P)$ and $V(Q)$.
By symmetry, we may assume that $e=u_1v_1$.
If $c(u_3v_3)=1$,
then the edge exchange $-\{ u_2u_3,v_2v_3\}+\{ u_1v_1,u_3v_3\}$ is contradicting.
This implies $c(u_3v_3)=-1$ and, consequently, 
$(V(K_n),(E(F)\setminus \{ u_2u_3,v_2v_3\})\cup \{ u_1v_1,u_3v_3\})$
is a $P_4$-factor $F'$ of $K_n$ with $c(E(F'))=-2$
that shares all but the two paths $P$ and $Q$ with $F$.
Together with Claim \ref{claim5} and Claim \ref{claim6}, 
this clearly yields a contradiction.
\end{proof}
 
\begin{claim}\label{claim8}
If $P:u_1u_2u_3u_4$ and $Q:v_1v_2v_3v_4$ are two distinct paths in $F$ 
of type $(1,-1,-1)$,
then $c(e)=1$ for at least $5$ of the $16$ edges $e$ of $K_n$ between $V(P)$ and $V(Q)$.
\end{claim}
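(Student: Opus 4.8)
The plan is to argue, exactly as in the earlier claims, by exhibiting \emph{contradicting} edge-exchanges: since we assume $K_n$ has no zero-sum $P_4$-factor, no edge-exchange applied to $F$ can produce a $P_4$-factor $F'$ with $c(E(F'))=0$, and each such impossibility forces a sign condition on the $16$ cross edges between $V(P)$ and $V(Q)$. Throughout I would use the structure of a type-$(1,-1,-1)$ path recorded in (\ref{e1}): for $P:u_1u_2u_3u_4$ the edges $u_1u_2$ and $u_1u_4$ are positive while $u_2u_3,u_3u_4$ are negative, so $u_1$ is distinguished, $u_3$ is distinguished, and there is a symmetry exchanging $u_2$ and $u_4$; the same holds for $Q$, and there is the further symmetry exchanging $P$ and $Q$. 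As in the proof of Claim \ref{claim1}, these symmetries will cut the number of cases down drastically.

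First I would extract everything obtainable from $2$-for-$2$ exchanges, i.e.\ removing one edge of $P$ and one of $Q$ and re-joining the pieces by two cross edges. One checks that the only such exchanges that can change $c(E(F))$ by the required $-2$ remove two end-edges. Removing the two positive end-edges $u_1u_2$ and $v_1v_2$ and re-attaching the isolated vertices $u_1,v_1$ to the opposite $P_3$'s shows that the four edges $u_1v_2,u_1v_4,u_2v_1,u_4v_1$ must all carry the \emph{same} sign. Removing $u_1u_2$ together with the negative end-edge $v_3v_4$ shows that either $u_1v_1,u_1v_3$ are both positive or $u_2v_4,u_4v_4$ are both positive; combining this with its symmetric variants (under $u_2\leftrightarrow u_4$, $v_2\leftrightarrow v_4$ and $P\leftrightarrow Q$) yields the dichotomy that \emph{either} $u_1v_1,u_1v_3,u_3v_1$ are all positive \emph{or} the four edges $u_2v_2,u_2v_4,u_4v_2,u_4v_4$ are all positive.

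The crucial point is that these $2$-for-$2$ exchanges never reach the five edges $u_3v_3,\ u_2v_3,\ u_4v_3,\ u_3v_2,\ u_3v_4$ incident with the two interior negative vertices $u_3,v_3$, and the constraints above alone still permit as few as three positive cross edges (for example all edges negative except $u_1v_1,u_1v_3,u_3v_1$). So the heart of the proof is a family of $3$-for-$3$ exchanges reaching these edges. A representative one removes $u_2u_3,v_1v_2,v_2v_3$ and re-joins the pieces as the two paths $v_1u_2u_1v_2$ and a $P_4$ on $\{u_3,u_4,v_3,v_4\}$, using the added edges $u_2v_1,\,u_1v_2$ and one of $u_3v_3,u_3v_4,u_4v_3,u_4v_4$; since $c(E_{\rm out})=-1$ here, this is contradicting as soon as all three added edges are negative, and the free choice of the last edge lets us strike any of the interior edges. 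Exchanges of this shape, together with their symmetric variants, eliminate all the residual sign patterns with at most four positive cross edges.

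Finally I would organize everything into a short case analysis. The case in which $u_1v_2,u_1v_4,u_2v_1,u_4v_1$ are all positive already contributes four positives, and any application of the dichotomy or of one $3$-for-$3$ exchange supplies a fifth; the case in which they are all negative is the genuine one, where I would use the $2$-for-$2$ dichotomy to locate three or four guaranteed positives and then invoke the $3$-for-$3$ exchange above (choosing the last edge to hit an interior edge) to guarantee that the total reaches $5$. I expect the main obstacle to be precisely this last bookkeeping: the edges at $u_3$ and $v_3$ are accessible only through $3$-for-$3$ (and possibly longer) exchanges whose validity as genuine $P_4$-factors must be verified, and marshalling just enough of them to force five positive edges — without double-counting across the symmetry classes — is the delicate part.
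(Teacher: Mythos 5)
Your proposal is correct, but it takes a genuinely different route from the paper's proof. The paper does not restrict itself to exchanges that land exactly on a zero-sum factor: it uses the two exchanges $-\{u_1u_2,v_1v_2\}+\{u_2v_1,u_1v_2\}$ (when $c(u_2v_1)=-1$) and $-\{u_1u_2,u_2u_3,v_1v_2\}+\{u_1v_1,u_3v_1,u_2v_2\}$ (when $c(u_1v_1)=c(u_3v_1)=-1$), each of which produces a $P_4$-factor $F'$ with $c(E(F'))\in\{0,-2\}$, and it excludes the outcome $-2$ by a global argument: by Claim~\ref{claim5} some path of type $(1,1,-1)$, $(-1,1,-1)$, or $(1,-1,-1)$ other than $P$ and $Q$ survives the exchange, and by Claim~\ref{claim6} no factor of sum $-2$ contains such a path. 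This immediately forces $c(e)=1$ for all of $u_2v_1,u_4v_1,u_1v_2,u_1v_4$ and for at least one of $u_1v_1,u_3v_1$, giving the five edges in a few lines. Because you only allow exchanges reaching sum $0$, your $2$-for-$2$ moves can only conclude that $u_1v_2,u_1v_4,u_2v_1,u_4v_1$ share a sign, plus the dichotomy that either $u_1v_1,u_1v_3,u_3v_1$ or $u_2v_2,u_2v_4,u_4v_2,u_4v_4$ are all positive, and you genuinely need the $3$-for-$3$ move $-\{u_2u_3,v_1v_2,v_2v_3\}+\{u_2v_1,u_1v_2,e\}$ with $e\in\{u_3v_3,u_3v_4,u_4v_3,u_4v_4\}$ (here $c(E_{\rm out})=-1$, so the move is contradicting as soon as all three added edges are negative) to handle the all-negative case. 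Your bookkeeping does close: in the all-positive case the four shared-sign edges together with either branch of the dichotomy give at least $7$ positive cross edges, and in the all-negative case the $3$-for-$3$ move forces $u_3v_3,u_3v_4,u_4v_3,u_4v_4$ to be positive while either branch of the dichotomy adds at least three further edges, again at least $7$ in total. So your argument is purely local---it never invokes Claims~\ref{claim5} and~\ref{claim6}, hence does not inherit (through Claim~\ref{claim5}) any requirement that $n$ be large---and it even proves the stronger bound $7$ in place of $5$; what the paper's $\{0,-2\}$ device buys is brevity and the exact identification of which five edges are positive.
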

\begin{proof}[Proof of Claim \ref{claim8}]
If $c(u_2v_1)=-1$,
then 
$(V(K_n),(E(F)\setminus \{ u_1u_2,v_1v_2\})\cup \{ u_2v_1,u_1v_2\})$
is a $P_4$-factor $F'$ of $K_n$ with $c(E(F'))\in \{ 0,-2\}$.
If $c(u_1v_1)=c(u_3v_1)=-1$,
then 
$(V(K_n),(E(F)\setminus \{ u_1u_2,u_2u_3,v_1v_2\})\cup \{ u_1v_1,u_3v_1,u_2v_2\})$
is a $P_4$-factor $F'$ of $K_n$ with $c(E(F'))\in \{ 0,-2\}$.
By Claim \ref{claim5} and Claim \ref{claim6},
both cases yield a contradiction.
By symmetry, 
it follows that $c(e)=1$
for all edges $e$ in $\{ u_2v_1,u_4v_1,u_1v_2,u_1v_4\}$
as well as at least one edge $e$ in $\{ u_1v_1,u_3v_1\}$.
Altogether, there are at least $5$ edges $e$ with $c(e)=1$
between $V(P)$ and $V(Q)$.
\end{proof}
We can now use the claims to lower bound $c(E(K_n))$.
If $E_1$ is the set of edges of $K_n$ between different paths of type $(1,1,1)$ in $F$, 
then $|E_1|=16{\frac{x_1n}{4}\choose 2}$.
By Claim \ref{claim1}, all these edges $e$ satisfy $c(e)=1$, and, thus, 
$$c(E_1)=|E_1|=\frac{x_1^2n^2}{2}-2x_1n.$$
Similarly, if $E_4$ is the set of edges of $K_n$ between different paths of type $(1,-1,-1)$ in $F$, 
then Claim \ref{claim8} implies that the $16$ edges between any two such paths contribute at least
$5-11$ to the total sum of the edge labels, and, thus, 
$$c(E_4)\geq (5-11){\frac{x_4n}{4}\choose 2}=-\left(\frac{3x_4^2n^2}{16}-\frac{3x_4n}{4}\right).$$
If $E_{1,5}$ and $E_{2,5}$ are the sets of edges of $K_n$ 
between the paths of type $(1,1,1)$ and the paths of type $(-1,-1,-1)$
and 
between the paths of type $(1,1,-1)$ and the paths of type $(-1,-1,-1)$, respectively,
then Claim \ref{claim2} and Claim \ref{claim3} imply
\begin{eqnarray*}
c(E_{1,5}) & \geq & (12-4)\frac{x_1n}{4}\frac{x_5n}{4}=\frac{x_1x_5n^2}{2}\mbox{ and }\\
c(E_{2,5}) & \geq & (8-8)\frac{x_2n}{4}\frac{x_5n}{4}=0.
\end{eqnarray*}
Note that there are exactly $\frac{6n}{4}$ edges in $K_n$ 
that each have both their endpoints in one of the paths in $F$.
Therefore, combining the estimates implied by Claims 
\ref{claim1},
\ref{claim2},
\ref{claim3},
\ref{claim4},
\ref{claim7}, and
\ref{claim8}, we obtain that 
\begin{eqnarray*}
c(E(K_n)) & \geq & 
\left(\frac{x_1^2n^2}{2}-2x_1n\right)
+\left(\frac{x_2^2n^2}{2}-2x_2n\right)
+\left(\frac{x_3^2n^2}{2}-2x_3n\right)\\
&&
-\left(\frac{3x_4^2n^2}{16}-\frac{3x_4n}{4}\right)
-\left(\frac{x_5^2n^2}{2}-2x_5n\right)\\
&&
+x_1x_2n^2
+x_1x_3n^2
+x_2x_3n^2
+\frac{x_1x_4n^2}{2}
+\frac{x_1x_5n^2}{2}
-\frac{x_4x_5n^2}{2}
-\frac{6n}{4}\\
&=& 
\left(
\underbrace{\frac{x_1^2}{2}+\frac{x_2^2}{2}+\frac{x_3^2}{2}-\frac{3x_4^2}{16}-\frac{x_5^2}{2}
+x_1x_2+x_1x_3+x_2x_3+\frac{x_1x_4}{2}+\frac{x_1x_5}{2}-\frac{x_4x_5}{2}}_{=f(x_1,x_2,x_3,x_4,x_5)
\,\,as\,\,in\,\,Lemma\,\,\ref{lemma1}}
\right)n^2\\
&& -\left(
\underbrace{2x_1+2x_2+2x_3-\frac{3}{4}x_4-2x_5+\frac{3}{2}}_{
\leq 2(x_1+x_2+x_3)+\frac{3}{2}
\stackrel{(\ref{e4})}{\leq}\frac{7}{2}}
\right)n\\
& \geq & \frac{5}{256}n^2-\frac{7}{2}n,
\end{eqnarray*}
where the final inequality follows using Lemma \ref{lemma1}, (\ref{e4}), and (\ref{e5}).
For sufficiently large $n$, we obtain the final contradiction $c(E(K_n))>0$,
which completes the proof.
\end{proof}

\end{document}